%% file: kernelshift-final.tex
\input{newcommand.tex}
\usepackage[margin=90pt]{geometry}
\usepackage{verbatim}
\usepackage{array}
\usepackage{soul}
\usepackage{color}
\usepackage[normalem]{ulem}

\newcommand{\mff}{\mfrak{f}}

\nc{\bx}{\bold{x}}
\nc{\by}{\bold{y}}
\nc{\bz}{\bold{z}}
\nc{\ba}{\bold{a}}
\nc{\phm}{\Phi_{[m]}}
\nc{\lan}{\langle}
\nc{\Fm}{F_{[m]}}
\nc{\ran}{\rangle}
\nc{\Fd}{F^\dagger}
\nc{\Rd}{R^\dagger}
\nc{\mlow}{m_{\mathrm{l}}}
\nc{\mup}{m_{\mathrm{u}}}
\nc{\ord}{\mb{ord }}
\nc{\bXp}{\bX_{\mathrm{prim}}}
\nc{\bPsi}{\bold{\Psi}}
\nc{\mult}{\mathrm{mult}}
\nc{\mbB}{\mathbbm{B}}
\nc{\mfor}[1]{{#1}^{\mathrm{for}}}
\nc{\latder}{\tilde{\partial}}
\nc{\bg}{{\bold h}}
\nc{\bff}{{\bold f}}
\nc{\bu}{{\bold u}}
\nc{\mO}{\mathcal{O}}
\nc{\tTheta}{\tilde{\Theta}}
\nc{\io}{\iota}
\nc{\cb}{\mathbf{b}}
\nc{\tF}{\tilde{F}}
\nc{\mg}{\mathfrak{g}}
\nc{\bc}{\mathbf{c}}
\nc{\Witt}{\text{$\mathbb{W}$itt}}
\nc{\mW}{\mathbb{W}}
\nc{\mH}{\mathbf{H}}
\nc{\Frac}{\text{Frac}}

\nc{\bHnew}{\tilde{\bH}}
\nc{\bGa}{\bar{\mathbb{G}}_a}
\nc{\Iso}{\mathrm{FIso}}

\nc{\cblue}[1]{\color{black} {#1} \color{black}}
\nc{\Wm}[1]{W_{[m]#1}}
\nc{\Ima}[1]{I_{[m]{#1}}}
\nc{\pr}{\mathrm{pr}}
\nc{\fra}{\mathfrak{f}}

\usepackage [english]{babel}
\usepackage [autostyle, english = american]{csquotes}
\MakeOuterQuote{"}

\title{Kernels of Arithmetic Jet Spaces and Frobenius Morphism}
\author{Rajat Kumar Mishra and Arnab Saha}
\date{}
\email{rajat@math.tifr.res.in, arnab.saha@iitgn.ac.in}
\address{Tata Institute of Fundamental Research, Mumbai, Maharastra 400005, India} 
	\address{Indian Institute of Technology Gandhinagar, Gujarat 382355, India}
\subjclass[2010]{Primary 11G99, 14L15, 14B25, 14K15, 11G07}
\keywords{arithmetic jet spaces, Witt vectors, lift of Frobenius, formal schemes, group schemes}

\usepackage{hyperref}
\usepackage{cite}
\begin{document}
	
	\begin{abstract}
For any $\pi$-formal group scheme $G$, 
the Frobenius morphism between arithmetic jet spaces 
restricts to generalized kernels of the projection map. Using the functorial
properties of such kernels of arithmetic jet spaces, we show that this morphism
is indeed induced by a natural ring map between shifted $\pi$-typical Witt 
vectors. 

In the special case when $G = \hG$, the arithmetic jet space, as well
as the generalized kernels are affine $\pi$-formal planes with Witt 
vector addition as the group law. In that case the above morphism is the 
multiplication by $\pi$ map on Witt vector schemes. In fact, the system
of arithmetic jet spaces and generalized kernels of any $\pi$-formal group
scheme $G$ along with their maps and identitites satisfied among them  
are a generalization of the case of the Witt vector scheme with the system
of maps such as the Frobenius, Verschiebung and multiplication by $\pi$.

\color{black}
	\end{abstract}
	\maketitle
	\section{Introduction}
	
	The theory of delta geometry and arithmetic jet spaces, developed by Buium,  is inspired by the  theory of differential algebra over function fields. 
	Let us fix a Dedekind domain $\mO$ with finite residue fields 
and a nonzero prime ideal $\mf{p}$ of  $\mO$. Let $k$ be the residue field of $\mf{p}$ of cardinality $q$ which is  a power of a prime $p$, and  $\pi$ be a uniformizer of the discrete valuation ring $\mO_{\mf{p}}$. Let $R$ be an 
$\mO_{\mf{p}}$-algebra that is complete with respect to the $\pi$-adic topology, equipped with a $\pi$-derivation $\delta$. Consider any $\pi$-formal scheme $X$ defined over $S=\Spf R$. In analogy with differential algebra, for each integer $n \ge 0$, the $n$-th arithmetic jet functor is defined by
	$$J^nX(B):=X(W_n(B))$$ where $W_n(B)$ denotes the $\pi$-typical Witt vectors of length $n+1$ associated to any $\pi$-adically complete 
$R$-algebra $B$ \cite{Bo1,Drn,Lslhlt,Joyal}. By \cite{BePrSa,Bo2}, the functor $J^nX$ is representable by a $\pi$-formal scheme over $S$ which we also denote by $J^nX$. 
The system of $\pi$-formal schemes $\{J^*X\} = \{J^nX\}_{n=0}^\infty$ is 
called the canonical prolongation sequence.
In the category of $\pi$-formal schemes, the above definition of $J^nX$ 
coincides with the arithmetic jet space constructed by Buium \cite{Bu2}. 
	
	In \cite{Bu2}, Buium initiated the theory of delta characters for an abelian variety.  Subsequently, this theory was further developed in a series of articles, including \cite{Bar,BoGu,Bu3,Bu4,BuSa1,BuSa2,Hurlburt} by various people such as Buium, Borger, Saha, Barcau, and others. It has also found remarkable
 applications in diophantine geometry as in \cite{Bu1,BuPu}. 
The theory of delta geometry has also been 
used by Bhatt and Scholze   in the construction of prismatic cohomology \cite{BhSh}.

Suppose $G$ is a $\pi$-formal group scheme over $S$. The restriction map
on the $\pi$-typical Witt vectors induces the projection map $u_m: J^{m+n}G 
\map J^mG$ for all $m,n \geq 0$.
We define the kernel of $u_m$ to be the generalized kernel $N^{[m]n}G$.
On the other hand, the Frobenius on the $\pi$-typical Witt vectors gives 
a Frobenius morphism $\phi: J^{m+n}G \map J^{m-1+n}G$ of $\pi$-formal schemes 
for all $m,n \geq 0$. Hence restricting the Frobenius $\phi$ on $N^{[m]n}G$
induces a morphism $\phm:N^{[m]n}G \map N^{[m-1]n}G$.

In this article, using the functorial description of $N^{[m]n}G$, we show
that the morphism $\phm$ can be obtained from a natural ring map (denoted 
$E_{[m]}$) between 
shifted $\pi$-typical Witt vectors. 
	
When $G$ is a smooth commutative $\pi$-formal group scheme of relative 
dimension $g$ over $S$ and the ramification index satisfies 
$v_\pi(p) \leq p-2$, then
by \cite{PaSa} we have an isomorphism of $\pi$-formal group schemes
$N^{[m]n}G \simeq \WW_{n-1}^g$ where $\WW_{n-1}$ is the affine
$n$-plane $\hA^n$ endowed with the group structure of the additive Witt vectors
of length $n$. In this particular case, our morphism $\phm$ is in fact 
the multiplication by $\pi$ map on $\WW_{n-1}^g$. However a more general 
result is true which is explained in Theorem \ref{phmintro}.
	
Let us explain our results in a greater detail. As in \cite{PaSa} 
for any $m,n\geq 0$, consider the ring of $m$-shifted Witt vectors of length $m+n+1$ defined as $W_{[m]n}(B):=W_m(R)\times_{W_m(B)}W_{m+n}(B)$.   
The ring $W_{[m]n}(B)$ admits a canonical delta structure and the lift of 
Frobenius associated to this delta structure is called the \textit{lateral Frobenius} ${F}_{[m]}:W_{[m]n}(B)\rightarrow W_{[m]n-1}(B^\phi)$ 
where $B^\phi$ is the ring $B$ with the structure map given as $R 
\stk{\phi}{\map} R \map B$. The lateral Frobenius
satisfies the following  identity $$F^{m+2}\circ I_{[m]n}= F^{m+1}\circ I_{[m]n}\circ {F}_{[m]},$$ where $F:W_{m+n}(B)\rightarrow W_{m+n-1}(B^\phi)$ is the usual Frobenius map of Witt vectors and $I_{[m]n}:W_{[m]n}(B)\rightarrow W_{m+n}(B)$ is the natural map between these rings as explained in \cite{BoSa1, PaSa}. Also let 
$T: W_{[m]n}(B) \map W_{[m](n-1)}(B)$ denote the restriction map. 

In this article we show that, for all $m\geq 1$, there exists a ring homomorphism $E_{[m]}:W_{[m]n}(B)\rightarrow W_{[m-1]n}(B^\phi)$ such that it commutes with the usual Frobenius map satisfying 	
	\begin{align}
\nonumber
		\xymatrix{
			\Wm{n}(B)\ar[d]_{E_{[m]}}  \ar[r]^{\Ima{n}} & W_{m+n}(B) \ar[d]^F\\
			W_{[m-1]n}(B^\phi) \ar[r]_{I_{[m-1]n}} & W_{m-1+n}(B^\phi).
		}
	\end{align}
	
Consider the system of $\pi$-formal group schemes $N^{[m]*}G :=
\{N^{[m]n}G\}_{n=0}^\infty$. Then $N^{[m]*}G$ forms a prolongation sequence, 
in fact $N^{[m]*}G  \simeq J^*(N^{[m]1}G)$ \cite{PaSa}.
The following is our first main result which is proved in Theorem 
\ref{phm&fm_comm}.

\begin{theorem}
For all $m \geq 1$, the ring map $\Em$ induces a natural morphism of 
prolongation sequences $\phm: N^{[m]*}G \map N^{[m-1]*}G$ such that 
$\fra_m$ and $\phm$ commutes with each other. In particular, the following 
diagram commutes for every $m,n \geq 1$
			$$\xymatrix{
				N^{[m]n}G\ar[rr]^-{\mff_m}\ar[d]_-{\phm} & & N^{[m]n-1}G\ar[d]^-{\phm}\\
				N^{[m-1]n}G\ar[rr]^-{\mff_{m-1}} & & N^{[m-1]n-1}G.
			}$$

\end{theorem}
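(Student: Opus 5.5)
The plan is to reduce everything to the level of rings via the functor of points. By \cite{PaSa}, for any $\pi$-adically complete $R$-algebra $B$ the generalized kernel has the functorial description
\[
N^{[m]n}G(B) \;=\; \ker\bigl(G(\Wm{n}(B)) \to G(W_m(R))\bigr),
\]
where the map is induced by the projection $\Wm{n}(B)\to W_m(R)$. In this description $\mff_m : N^{[m]n}G \to N^{[m]n-1}G$ is induced by the lateral Frobenius $\Fm:\Wm{n}(B)\to W_{[m]n-1}(B^\phi)$.

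\textbf{Defining $\phm$.} We set $\phm$ on $B$-points to be $G(\Em)$ restricted to the kernel. For this to make sense, $\Em$ must be compatible with the projections to $W_m(R)$ and $W_{m-1}(R)$. Concretely, $\Em$ should cover the Witt Frobenius $F: W_m(R)\to W_{m-1}(R)$ on the first factor of the fibre product. This follows from the construction of $\Em$ and from its commutation with $F$ through the maps $I_{[m]n}$ and $I_{[m-1]n}$. With that compatibility, $G(\Em)$ carries the kernel into the kernel, using that $G(F)$ on $G(W_m(R))$ preserves the identity section.

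\textbf{Naturality in $n$.} Since $\Em$ commutes with the restriction maps $T$, the maps $\phm$ are compatible with the projections $u$ and so form a morphism of systems. Naturality in $B$ is clear, so $\phm$ is a morphism of $\pi$-formal schemes by Yoneda and the representability of $N^{[m]n}G$. It is a group homomorphism because $\Em$ is a ring map.

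\textbf{Compatibility with the lateral Frobenius.} The remaining content, needed both for the square in the statement and for $\phm$ to be a morphism of prolongation sequences, is the ring identity
\[
F_{[m-1]}\circ \Em \;=\; \Em\circ \Fm \colon \Wm{n}(B)\to W_{[m-1]n-1}(B^{\phi^2}).
\]
Applying $G(-)$ to this identity gives the commuting square for $\mff$ and $\phm$ directly.

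\textbf{Proving the ring identity.} This is the main obstacle. I would test it after composing with $I_{[m-1]n-1}$, and if necessary after further composing with Frobenius powers $F^{k}$. Using $I\circ\Em = F\circ I$, the defining identity $F^{m+2} I = F^{m+1} I \Fm$, and the fact that $F$ commutes with itself, both sides become equal after applying a suitable $F^{m+1}\circ I_{[m-1]n-1}$.

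To pass from this to the actual equality, I would first treat the universal case, where $B$ is $\pi$-torsion free (for example a $\pi$-adically completed polynomial ring over $R$). There the ghost map is injective and $I$ composed with Frobenius powers is injective after inverting $\pi$, so the identity can be checked on ghost components. Concretely, the ghost components of $\Wm{n}$ are identified with those of $W_{m+n}$ shifted by $m$, $\Fm$ acts as a shift on them, and $\Em$ acts as the ordinary Frobenius shift. Both composites are then the same shift.

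For general $B$, I would deduce the identity by functoriality, writing $B$ as a quotient of a torsion-free algebra: $\Wm{n}$ preserves surjections and both sides are natural. Alternatively, if $\Em$ is defined as the Frobenius lift of a $\delta$-structure commuting with the canonical one on $\Wm{n}$, the identity is immediate.
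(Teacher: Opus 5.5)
Your proposal is correct and follows the paper's proof. Like the paper, you reduce via the functor of points to the ring identity $F_{[m-1]}\circ \Em = \Em\circ \Fm$ and check it on ghost components for $\pi$-torsion-free $B$, where both composites send $\langle r_0,\ldots,r_m,z_{m+1},\ldots,z_{m+n}\rangle$ to $\langle \phi(r_1),\ldots,\phi(r_m),z_{m+2},\ldots,z_{m+n}\rangle$; note that ${F}_{[m]_w}$ applies $\phi$ to the $R$-components rather than shifting them.

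The detour through $F^{m+1}\circ I_{[m-1]n-1}$ is unnecessary. Your claim that this composite is injective after inverting $\pi$ is false, since positive Frobenius powers discard ghost components, including all the $R$-components. This is harmless, because injectivity of the ghost map alone is all the argument needs.
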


	\begin{theorem}
\label{phmintro}
		For all $m \geq 1$, the  natural 
		morphism of prolongation sequences $\phm :N^{[m]*}G \map N^{[m-1]*}G$ 
 satisfies the following commutative diagram
		\begin{align}
			\nonumber
			\xymatrix{
				0 \ar[r] & N^{[m]n}G \ar[dr]^{J^{n-1}(\Psi^{[m]}_1)} \ar[ddd]_-{\phm} \ar[rr]^{\iota_m} & & 
				J^{m+n}G \ar[ddd]_-\phi \ar[r]^{u_m} & J^mG \ar[r] \ar[ddd]_-\phi & 0\\
				& & (\bb{W}_{n-1})^g \ar[d]_-{\mult_\pi} & & & \\
				& & (\bb{W}_{n-1})^g & & & \\
				0 \ar[r] & N^{[m-1]n}G \ar[ur]_{J^{n-1}(\Psi_1^{[m-1]})} \ar[rr]_{\iota_{m-1}} & 
				&J^{m-1+n}G \ar[r]_{u_{m-1}} & J^{m-1}G \ar[r] & 0.\\
			}
		\end{align}
where $\Psi_1^{[m]}$ is as defined in Section \ref{KJS}.
	\end{theorem}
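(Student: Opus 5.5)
The diagram splits into two squares. The right square expresses that $\phi$ on arithmetic jet spaces commutes with the projections $u_m$ and $u_{m-1}$. The middle portion says that $\phm$ is the restriction of $\phi$ to kernels and is intertwined with multiplication by $\pi$ via the isomorphisms $J^{n-1}(\Psi_1^{[m]})$. I will treat them separately, working functorially on points with values in $\pi$-adically complete $R$-algebras $B$.

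\textbf{Right square and left square.} On points, $\phi: J^{m+n}G(B)=G(W_{m+n}(B)) \to G(W_{m-1+n}(B^\phi))$ is induced by the Witt vector Frobenius $F$. Since $F$ commutes with the restriction maps $W_{m+n}\to W_m$ and $W_{m-1+n}\to W_{m-1}$, we get $u_{m-1}\circ\phi=\phi\circ u_m$. By the paper's description, $N^{[m]n}G(B)=G(W_{[m]n}(B))\times_{G(W_m(R))}\{\text{pt}\}$, and $\iota_m$ is induced by $I_{[m]n}$. The earlier Theorem (Theorem \ref{phm&fm_comm}) defines $\phm$ via $\Em$, and the compatibility $F\circ I_{[m]n}=I_{[m-1]n}\circ \Em$ gives $\phi\circ\iota_m=\iota_{m-1}\circ\phm$ directly after applying $G(-)$. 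Exactness of the rows is the definition of $N^{[m]n}G$ together with the smoothness-induced surjectivity of $u_m$ from \cite{PaSa}.

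\textbf{Triangle with $\mult_\pi$.} This is the substantive part: show $J^{n-1}(\Psi_1^{[m-1]})\circ\phm=\mult_\pi\circ J^{n-1}(\Psi_1^{[m]})$. Both sides are morphisms of prolongation sequences $N^{[m]*}G\to J^*(\hat{\mathbb{G}}_a^g)=\mathbb{W}_*^g$, the latter because $\phm$ is one by the earlier Theorem and $\Psi$'s are $J^{n-1}$ of something. Since $N^{[m]*}G\simeq J^*(N^{[m]1}G)$, the universal property of jets reduces the claim to level $n=1$. There we must check that $\Psi_1^{[m-1]}\circ\Phi_{[m]}=\pi\cdot\Psi_1^{[m]}$ as maps $N^{[m]1}G\to\hat{\mathbb{G}}_a^g$. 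For this I expect $\Psi_1^{[m]}$ (Section \ref{KJS}) to be built from the logarithm of $G$ on the kernel: a point of $N^{[m]1}G$ is a $W_{[m]1}$-point lying over the identity of $G(W_m(R))$, equivalently a point of the formal group whose ghost components vanish except the last one. $\Psi_1^{[m]}$ then reads off the top ghost/Witt coordinate divided by a suitable power of $\pi$, via the logarithm.

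\textbf{Key computation and main obstacle.} Under $F$, the ghost vector $(w_0,\dots,w_{m+1})$ shifts to $(w_1,\dots,w_{m+1})$. For $\Em$, the top coordinate of the shifted Witt vector in $W_{[m-1]1}$ is related to that of $W_{[m]1}$ by one fewer factor of $\pi$ in the normalization $w_{m+1}=\pi^{m+1}x_{m+1}+\cdots$. This produces exactly the factor $\pi$. I expect the main obstacle to be this normalization: making precise, via the explicit formula for $\Em$ on the top coordinate (the $\pi$-power bookkeeping between $I_{[m]1}$ and $I_{[m-1]1}$), that the primitive coordinate picks up exactly $\pi$, and that the logarithm (with the hypothesis $v_\pi(p)\le p-2$ ensuring integrality) respects this. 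I would first verify it for $G=\hG$, where everything is explicit, and then transport it through the logarithm of $G$ using naturality of $\Em$ in $G$.
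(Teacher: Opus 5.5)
Your proposal is correct and follows essentially the paper's route: the left square comes from $F\circ I_{[m]n}=I_{[m-1]n}\circ\Em$ after applying $G(-)$ (Theorem \ref{Phi_m_i_m}), and the triangle is reduced to $n=1$ (the paper simply applies $J^{n-1}$). At that level the ghost shift gives $\phm(t_0)=\pi t_0$ in local Witt coordinates, and combining this with $\Psi_1^{[m]}(t_0)=\pi^{-(m+1)}\log_{G,\phi^{\circ(m+1)}}(\pi^{m+1}t_0)$ yields $\Psi_1^{[m-1]}\circ\phm=\pi\,\Psi_1^{[m]}$. The passage from $\hG$ to general $G$ is cleanest via the \'etale chart $U\to H$ of Lemma \ref{Affine_Nmn}, where $\phm$ is natural, rather than via $\log_G$, which is not a morphism of $\pi$-formal schemes; and you should note that the mismatch between $\log_{G,\phi^{\circ m}}$ and $\log_{G,\phi^{\circ(m+1)}}$ is absorbed precisely because $\phm$ lies over $\phi$.
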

	The above result is proved as Theorem \ref{Psimiso}.
We now illustrate our result in the special case when $G = \hG$. 
Then note that for 
	all $n \geq 0$ we have $J^nG \simeq \bb{W}_n$ where $\bb{W}_n$ is the affine
	plane of relative dimension $n+1$ over $S$ endowed with the group law of the 
	addition of the Witt vectors. Hence we have the following commutative
diagram of $\pi$-formal group schemes
	$$
	\xymatrix{
		0 \ar[r] & N^{[m]n}(\hG) \ar[r]^{\iota_m}\ar[d]_{\rotatebox{90}{$\sim$}} & J^{m+n}(\hG) \ar[r]^u 
		\ar[d]_{\rotatebox{90}{$\sim$}} & J^m(\hG) \ar[r] 
\ar[d]_{\rotatebox{90}{$\sim$}} & 0 \\
		0 \ar[r] & \bb{W}_{n-1} \ar[r]^{V^{\circ m}} & \bb{W}_{m+n} \ar[r]^T & 
		\bb{W}_m \ar[r] & 0. 
	}$$
	
	In this particular case of $G= \hG$, the Witt vector Frobenius map $F$ has 
	two different incarnations:
	
	$(1)$ The lift of Frobenius map $\phi: J^{m+n}(\hG) \simeq \bb{W}_{m+n} 
	\map \bb{W}_{m-n+n} \simeq J^{m-1+n}(\hG)$ is given by the Witt vector 
	Frobenius $F:\bb{W}_{m+n} \map \bb{W}_{m-1+n}$.
	
	$(2)$ The lateral Frobenius map $\fra_m: N^{[m]n}(\hG) \simeq \bb{W}_{n-1}
	\map \bb{W}_{n-2} \simeq N^{[m]n-1}(\hG)$ (as in \cite{BoSa1, PaSa}) also identifies with the Witt vector
	Frobenius $F: \bb{W}_{n-1} \map \bb{W}_{n-2}$.

The following table is a comparison between the maps and identities 
satisfied in the special case of $\hG$ and the general case of
a $\pi$-formal group scheme.
	
\vspace{.25cm}

	\begin{tabular}{|l|l|l|}
		\hline
		& Case of $G=\hG$ & General case of group scheme $G$ \\
		\hline
		Morphisms & $(i)$ Verschiebung map $V^{\circ (m+1)}: \bb{W}_{n-1} \map \bb{W}_{m+n} $ 
		& $(i)$ $\iota_m: N^{[m]n}G \map J^{m+n}G$ \\
		& $(ii)$ Frobenius map $F:\bb{W}_{m+n} \map \bb{W}_{m+n-1}$ 
		& $(ii)$ $\phi:J^{m+n}G \map J^{m+n-1}G$ and \\
		& \hspace{.5cm} Frobenius map $F: \bb{W}_{n-1} \map \bb{W}_{n-1}$ & \hspace{.5cm} $\fra_m:N^{[m]n}G 
		\map N^{[m]n-1}G$\\
		& $(iii)$ Multiplication by $\pi: \bb{W}_{n-1} \map \bb{W}_{n-1}$  
		& $(iii)$ $\phm: N^{[m]n}G \map N^{[m-1]n}G$ \\
		\hline
		Identities & $(i)$ For $m \geq 2$, & $(i)$ For $m \geq 2$, \\
		& $~~~$ $F \circ V^{\circ (m+1)} = V^{\circ m} \circ \pi $ & $~~~$
		$\phi \circ \iota_m = \iota_{m-1} \circ \phm$ \\
		& $(ii)$ For $n \geq 2$, & $(ii)$ For $n \geq 2$, \\
		& $~~~$ $F^{\circ (m+n)} \circ V^{\circ (m+1)} = F^{\circ (m-1+n)} \circ 
		V^{\circ (m+1)} \circ F$ 
		& $~~~$ $\phi^{\circ (m+n)} \circ \iota_m = 
		\phi^{\circ (m-1+n)}\circ \iota_m \circ \fra_m$ \\
		& $(iii)$ $\pi \circ F = F \circ \pi$ & $(iii)$ $ \phm \circ \fra_m = 
		\fra_m \circ \phm$ \\
		\hline
	\end{tabular}
	
	\vspace{.3cm}

\textbf{Acknowledgments.} The first author would like to thank CSIR for  financial support under the scheme 09/1031(0011)/2021-EMR-I. The second author
would like to heartily thank the Lodha Mathematical Sciences Institute for 
their hospitality and support. 

%

\section{Notations}
We collect here some notations fixed throughout the paper.
\begin{itemize}
	\item[] $p=$ a prime number
	\item[] $\mathcal{O}=$ a Dedekind domain with a finite residue field
	\item[]$\mathfrak{p}=$ a fixed non-zero prime ideal of $\mO$
	\item[] $\pi$= a generator of $\mathfrak{p}\mO_{\mathfrak{p}}$
	\item[] $k=$ a residue field of $\mO$ at $\pi$ with cardinality $q$, we assume $k$ to be perfect 
	\item[] $R=$ Fixed $\mathcal{O}$-algebra and $\pi$-adically complete discrete valuation ring
	\item[] $\phi=$ An endomorphism of $R$ satisfying $\phi(x)\equiv x^q$ mod $\mathfrak{p}$ for all $x\in R$
	\item[] $B=$ a $\pi$-adically complete $R$-algebra
	\item[] $S=\Spf R$
	\item[] $M_K=K\otimes_R M,$ for any $R$-module $M$ and $K=\Frac(R)$
	\item[] $v_\pi=$ the valuation on $R$ normalized such that $v_{\pi}(\pi)=1$
	\item[] $e=$ the ramification index $v_{\pi}(p)\leq p-2$
	\item[] $l=$ the residue field of $R$
	\item[] $X=$ a $\pi$-formal scheme
	\item[]$G=$ a commutative smooth $\pi$-formal group scheme over $S$
	
\end{itemize}

\section{Arithmetic Jet Spaces}\label{AJS}
Let $\mO$ be a Dedekind domain with a finite residue field and $\mathfrak{p}$ a non-zero prime ideal with a perfect field $k$ as the residue field and $q$ be the cardinality of $k$ where $q=p^h$ for some prime $p$ and $h\geq 1$. Let $\pi$ be one of the uniformizers of $\mathfrak{p}$. Let $B$ be an $\mO$-algebra and $A$ be a $B$-algebra. We define a $\pi$-derivation $\delta$ as a set-theoretic map $\delta:B\rightarrow A$ that satisfies for all $x,y\in B,$
\begin{enumerate}
	\item $\delta(1)=0$
	\item $\delta(x+y)=\delta(x)+\delta(y)+C_\pi(x,y)$
	\item $\delta(xy)=u(x)^q\delta y+u(y)^q\delta x+\pi\delta x\delta y $
\end{enumerate}

where $u:B\rightarrow A$ is the structure map and 
$$ C_\pi(x,y)=
\begin{cases}
	\frac{x^q+y^q-(x+y)^q}{\pi}, & \text{if Char }\mO=0\\
	0, & \text{otherwise.}
\end{cases}
$$
Given such a $\pi$-derivation $\delta$, we define the lift of frobenius $\phi(x):=x^q+\pi\delta x$ which is a ring homomorphism satisfying $$\phi(x)\equiv x^q \mod \fp.$$ 

Fix an $\mO$-algebra $R$ with a $\pi$-derivation $\delta$ and $S=\Spec R$.
Let $X$ and $Y$ be schemes over $S$. We say $(u,\delta)$ is a prolongation and we write $Y\xrightarrow{(u,\delta)}X$, if $u:Y\rightarrow X$ is a map of $S$-schemes and 
$\delta:\mO_X\rightarrow u_*\mO_Y$ is a $\pi$-derivation making the following diagram commute:
$$ 
\xymatrix{
	R\ar[r] &u_*\mO_Y\\
	R\ar[u]^{\delta}\ar[r] & \mO_X.\ar[u]_{\delta}
}$$
We define a prolongation sequence to be a sequence 
$$S\xleftarrow{(u,\delta)}T^0\xleftarrow{(u,\delta)}T^1\xleftarrow{(u,\delta)}\cdots,$$
where $T^i\xleftarrow{(u,\delta)}T^{i+1}$ are prolongations satisfying $$u^*\circ\delta=\delta\circ u^*$$
where $u^*$ is the pull back morphism of sheaves induced by $u$ for each $i$. We will denote a prolongation sequence as $T^*$ or $\{T^n\}_{n\geq 0}.$
Prolongation sequences naturally
form a category $\mathcal{C}$. Define $S_*$ the \textit{prolongation sequence} defined by $S_i = \Spec R$ for all $i, u=$id and $\delta$ is the fixed $\pi$-derivation on $R$. Then let $\mathcal{C}_{S_*}$ denote the category
of prolongation sequences defined over $S_*$. For any $S$-scheme $X$ and for all $n \geq 0$
we define the $n$-th jet space functor $J^nX$ as $$J^nX(B):=X(W_n(B))=\Hom_S(\Spec (W_n(B)),X),$$
for any $R$-algebra $B$, where $W_n(B)$ is the ring of $\pi$-typical Witt vectors of length $n+1$. Let us recall the definition and some important maps associated to it in the next brief discussion. For more detailed analysis, we refer \cite{BoSa1} Section 3.

Let $B$ be an $R$-algebra, then we denote $B^{\phi^n}$ the $R$-algebra $B$ with the structure map $R\xrightarrow{\phi^n}R\rightarrow B$ and $\prod_{\phi}^nB=B\times B^\phi\times\cdots \times B^{\phi^n}$. 
As sets we define $$W_n(B)=B^{n+1}$$ to be the set of Witt vectors of length $n+1$ and define the set map $w:W_n(B)\rightarrow \prod_\phi^nB$ by $w(x_0,\cdots,x_n)=\langle w_0,\cdots,w_n\rangle$ where $$w_i=x_0^{q^i}+\pi x_1^{q^{i-1}}+\cdots+\pi^i x_i$$ are the Witt polynomials and $w$ is known as the ghost map. We can define the ring $W_n(B)$, the ring of truncated $\pi$-typical Witt vectors by Theorem 3.1 of \cite{BoSa1}, that says; for every $n\geq 0$, there exists a unique functorial $R$-algebra structure on $W_n(B)$ such that $w$ becomes a natural transformation of functors of $R$-algebras.
Further recalling some important operators on the Witt vectors, on the ghost side we have the truncation operator $T_w:\prod_\phi^nB\rightarrow \prod_\phi^{n-1}B$ given by $T_w\langle w_0,\cdots, w_n\rangle=\langle w_0,\cdots w_{n-1}\rangle$, that gives us the truncation map $T:W_n(B)\rightarrow W_{n-1}(B)$ given by $T(x_0,\cdots,x_n)=(x_0,\cdots,x_{n-1})$. Next we have the left shift Frobenius operator $F_w:\prod_\phi^nB\rightarrow \prod_\phi^{n-1}B^\phi$ given by $F_w\langle w_0,\cdots, w_n\rangle=\langle w_1,\cdots w_{n}\rangle$, which gives us the Frobenius ring homomorphism $F:W_n(B)\rightarrow W_{n-1}(B^\phi)$ that makes the following diagram commutative 

\begin{equation}
	\xymatrix{
		W_n(B)\ar[r]^w\ar[d]_F & \prod_\phi^n B\ar[d]_{F_w}\\
		W_{n-1}(B^\phi)\ar[r]^w & \prod_\phi^{n-1}B^\phi
	}
\end{equation}

and 
\begin{equation}\label{Frob_Witt}
	F(b_0,\cdots,b_n)=(F_0(b_0,b_1),\cdots,F_{n-1}(b_0,\cdots,b_n)),
\end{equation}
where $F_i$  is an integral polynomial of $i+1$ variables that is $F_i\in R[\bx_0,\cdots,\bx_i]$.

Given an $S$-scheme $X$, we define $X^{\phi^n}(B):=X(B^{\phi^n})$ for any $R$-algebra $B$. Hence, the above functor can be represented by the base change of $X$ over the map $\phi^n:S\rightarrow S$ given by $X^{\phi^n}=X\times_{S,\phi^n}S.$ Next we define;
${\prod}_\phi^nX:=X\times_S X^\phi\times_S \cdots \times_S X^{\phi^n}.$
Then for any $R$-algebra $B$, we have $\prod_\phi^nX(B)=X(B)\times X(B^\phi)\times\cdots\times X(B^{\phi^n})$. Therefore the ghost map $w$ defines a map of $\pi$-formal $S$-schemes $$w:J^nX\rightarrow {\prod}_\phi^n X.$$ 
Note that $w$ is injective when evaluated on points with coordinates in any flat $R$-algebra. The operators $F$ and $F_w$ induce the map $\phi$ and $\phi_w$ fitting into the following commutative diagram;
\begin{equation}
	\xymatrix{
		J^nX\ar[r]^w\ar[d]_\phi & \prod_\phi^n X\ar[d]_{\phi_w}\\
		J^{n-1}X\ar[r]^w & \prod_\phi^{n-1}X.
	}
\end{equation}
Here the map $\phi_w$ can be defined as the following left-shift operator, $\phi_w\langle w_0,\cdots,w_n\rangle=\langle \phi_S(w_1),\cdots, \phi_s(w_n)\rangle,$ where $\phi_S:X^{\phi^i}\rightarrow X^{\phi^{i-1}}$ is the composition of following diagram
\begin{equation}
	\xymatrix{
		X^{\phi^i}\ar[r]^{\hspace{-0.25in}\simeq} & X^{\phi^i}\times_{S,\phi}S\ar[r]\ar[d] & X^{\phi^{i-1}}\ar[d]\\
		& S\ar[r]^\phi_S & S.
	}
\end{equation}

\section{Shifted Witt Vectors}\label{SWV}
Given a $\pi$-adically complete $R$-algebra $B$, for a fixed $m$ we define the set  $$W_{[m]n}(B):=R^{m+1}\times B^n$$ for all $n\geq 0$. Let $f:R\rightarrow B$ be the structure map, then we have a natural map $$I_{[m]n}:W_{[m]n}(B)\longrightarrow W_{m+n}(B)$$ defined as $I_{[m]n}(r_0,\cdots,r_m,x_{m+1},\cdots,x_{m+n})=(f(r_0),\cdots,f(r_m),x_{m+1},\cdots,x_{m+n}).$
Define $${\prod}_{[m]n}B:=(R\times R^{\phi}\times \cdots R^{\phi^m})\times(B^{\phi^{m+1}}\times\cdots\times B^{\phi^{m+n}})$$ to be the product rings and is naturally an $R$-algebra and we have the natural map $${I_{[m]n}}_w:{\prod}_{[m]n}B\longrightarrow{\prod}_\phi^{m+n}B$$ defined as ${I_{[m]n}}_w\langle s_0,\cdots,s_m,z_{m+1},\cdots z_{m+n}\rangle=\langle f(s_0),\cdots,f(s_m),z_{m+1},\cdots z_{m+n}\rangle$. Consider the shifted ghost map $$w:W_{[m]n}(B)\rightarrow {\prod}_{[m]n}B$$ given by $$(x_0,\cdots x_m,x_{m+1},\cdots, x_{m+n})\mapsto \langle x_0,x_0^q+\pi x_1,\cdots, x_0^{q^{m+n}}+\pi x_1^{q^{m+n-1}}+\cdots+\pi^{m+n}x_{m+n}\rangle.$$

Then $W_{[m]n}(B)$ is naturally endowed with the Witt ring structure of addition and multiplication that makes $w$ a ring homomorphism. Let us define the following ring homomorphisms on the ghost side
\begin{enumerate}
	\item[(1)] The restriction map $T_w:\prod_{[m]n}B\rightarrow \prod_{[m]n-1}B$ as 
	\begin{equation}\label{T_w}
		T_w\langle z_0,\cdots ,z_m,z_{m+1},\cdots, z_{m+n}\rangle=\langle z_0,\cdots,z_m,z_{m+1},\cdots,z_{m+n-1}\rangle.
	\end{equation}
	\item[(2)] The $m$-th Frobenius map ${F}_{[m]_w}:\prod_{[m]n}B\rightarrow \prod_{[m]n-1}B^\phi$ as 
	\begin{equation}\label{F_w}
		{F}_{[m]_w}\langle z_0,\cdots ,z_m,z_{m+1},\cdots, z_{m+n}\rangle=\langle \phi(z_0),\cdots,\phi(z_m),z_{m+2},\cdots, z_{m+n}\rangle.
	\end{equation}
	\item[(3)] The right shift map $E_{[m]_w}:\prod_{[m]n}B\rightarrow \prod_{[m-1]n}B^\phi$ as
	\begin{equation}\label{F_m_w}
		E_{[m]_w}\lan z_0,\cdots ,z_m,z_{m+1},\cdots, z_{m+n}\ran =\lan z_1,\cdots, z_{m},z_{m+1},\cdots, z_{m+n}\ran .
	\end{equation} 
\end{enumerate}

We recall the following result from \cite{PaSa}.
\begin{theorem}
	\label{lat_Frob1}
	There exists a ring homomorphism ${F}_{[m]}:W_{[m]n}(B)\rightarrow W_{[m]n-1}(B^\phi)$ such that 
	$$
	\xymatrix{
		W_{[m]n}(B)\ar[r]^w\ar[d]_{{F}_{[m]}} & \prod_{[m]n}B\ar[d]^{{{F}}_{[m]_w}}\\
		W_{[m]n-1}(B^\phi)\ar[r]^w & \prod_{[m]n-1}B^\phi
	}
	$$
	commutes. Moreover if ${F}_{[m]}(x_0,\cdots,x_{m+n})=(\tilde{F}_0,\cdots,\tilde{F}_{m+n-1})$, then for all $0\leq h\leq m+n-1$ we have $$\tilde{F}_h\equiv x_h^q \mod \pi.$$
\end{theorem}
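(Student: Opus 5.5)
The plan is to deduce the statement from the structure of the ordinary Witt Frobenius. I will first show that the ghost-side map $F_{[m]_w}$ is compatible with the ghost map $w$ over universal (flat) coordinates, and then that the resulting polynomials have integral coefficients.

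\textbf{Reduction to a universal case.} Both $W_{[m]n}(B)$ and $\prod_{[m]n}B$ are functorial in $B$, and the ghost map is injective whenever $B$ (and $R$) are $\pi$-torsion free. Since $R$ is a discrete valuation ring of characteristic zero or a domain, and the universal case is a polynomial ring, it suffices to work with $B_0 = R[x_{m+1},\dots,x_{m+n}]^{\wedge}$ (the $\pi$-adic completion). In addition, the first $m+1$ coordinates are taken in $R$ itself, since $W_{[m]n}(B)=R^{m+1}\times B^n$.

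\textbf{Defining the coordinates.} Set $\langle z_0,\dots,z_{m+n}\rangle = w(x)$, where the $z_i$ are the ghost components. I want $y=(y_0,\dots,y_{m+n-1})$ with
\[
w(y)=\langle \phi(z_0),\dots,\phi(z_m),z_{m+2},\dots,z_{m+n}\rangle .
\]
I solve for the $y_h$ recursively, $y_h=\pi^{-h}\bigl(\text{target}_h-\sum_{i<h}\pi^i y_i^{q^{h-i}}\bigr)$. The coordinates split into two ranges.

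For $h\le m$, the target is $\phi(z_h)=\phi(w_h(x))=w_h(\phi(x_0),\dots,\phi(x_h))$, because $\phi$ is a ring endomorphism of $R$ and the $x_i$ lie in $R$ for $i\le m$. Hence $y_h=\phi(x_h)\in R$. This gives the first $m+1$ coordinates in $R$, and $\phi(x_h)\equiv x_h^q \bmod \pi$.

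For $h\ge m+1$, the target is $z_{h+1}$. Here I compare with the usual Frobenius $F$ on $W_{m+n}(B)$, which has ghost components $\langle z_1,\dots,z_{m+n}\rangle$. The key identity I want is that $z_{h+1}$, viewed as a polynomial in $x$, equals
\[
w_h(\phi(x_0),\dots,\phi(x_m),\,\cdot\,)+\pi^{h+1}x_{h+1},
\]
up to lower terms.

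\textbf{Integrality (the main obstacle).} The integrality of the $y_h$ is the core step. I would use Dwork's lemma in its $\pi$-typical form (as in \cite{BoSa1}): a sequence $\langle a_0,\dots,a_N\rangle$ in a $\pi$-torsion-free ring $A$ with a lift of Frobenius $\phi$ lies in the image of the ghost map iff $a_{i}\equiv \phi(a_{i-1}) \bmod \pi^{i}$ for all $i$. So I need to verify these congruences for the target sequence. On $B_0$ I take the lift $\phi$ extending $\phi$ on $R$ with $x_i\mapsto x_i^q$.

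Three kinds of index need checking:
\begin{itemize}
\item For indices $i\le m$, the congruences follow from those of $z$, after applying $\phi$.
\item At the junction $i=m+1$, the target is $z_{m+2}$ and I need $z_{m+2}\equiv\phi(\phi(z_m)) \bmod \pi^{m+1}$. Now $z_{m+2}\equiv\phi(z_{m+1})\bmod \pi^{m+2}$ and $z_{m+1}\equiv\phi(z_m)\bmod\pi^{m+1}$. Applying $\phi$ preserves congruences mod $\pi^{m+1}$, so the condition holds.
\item For $i\ge m+2$, the congruences are just those of $z$ shifted, and they hold with room to spare.
\end{itemize}
Hence $y_h\in B_0$ for all $h$, and $F_{[m]}$ is defined by universal integral polynomials. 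It is a ring homomorphism because $w$ is injective on the universal ring and $F_{[m]_w}$ is a ring homomorphism.

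\textbf{Congruence $\tilde F_h\equiv x_h^q \bmod \pi$.} For $h\le m$ this was shown above. For $h\ge m+1$, I expand $\pi^h y_h=z_{h+1}-\sum_{i<h}\pi^i y_i^{q^{h-i}}$ and use $y_i\equiv x_i^q$ inductively, in the same way as the standard proof that $F(x)_h\equiv x_h^q \bmod \pi$ for the usual Witt Frobenius.

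A cleaner route for this last step is to observe that $I_{[m]n-1}\circ F_{[m]}$ and $F\circ I_{[m]n}$ agree in coordinates $\ge m+1$ modulo $\pi$. I expect that the bookkeeping of $\phi$ acting on $R$ versus on the variables is the only delicate point.
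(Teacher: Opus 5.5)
Your proof is correct. The paper gives no argument of its own for this statement and simply cites \cite[Theorem 3.1]{PaSa}. Your ghost-component construction is the standard proof of exactly this fact: $\tilde F_h=\phi(x_h)$ is forced for $h\le m$, integrality comes from the $\pi$-typical Dwork lemma with the junction congruence $z_{m+2}\equiv\phi^2(z_m) \bmod \pi^{m+1}$, the ring-homomorphism property follows from injectivity of $w$ on $\pi$-torsion-free test rings, and $\tilde F_h\equiv x_h^q$ follows by the usual induction.

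Your induction for $\tilde F_h\equiv x_h^q$ also gives integrality by itself. Assuming $y_i\in B_0$ with $y_i\equiv x_i^q \bmod \pi$ for $i<h$, it shows that $z_{h+1}-\sum_{i<h}\pi^i y_i^{q^{h-i}}\in \pi^h x_h^q+\pi^{h+1}B_0$. So Dwork's lemma is not actually needed.
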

\begin{proof}
	We refer to Theorem 3.1 in \cite{PaSa} for the proof.
\end{proof}

\cblue{
	
	Consider the $\pi$-typical Witt vectors $W_n(B)$ of length $n+1$ where 
	$B$ is an $R$-algebra and the lift of Frobenius $F: W_n(B) \map 
	W_{n-1}(B^\phi)$ is
	given by 
	\begin{align}
		F(x) = (F_0(x), \dots, F_n(x))
	\end{align}
	where $x = (x_0,\dots, x_n) \in W_n(B)$ and for all $i=0, \dots , n$ we have
	$F_i(T_0,\dots T_i) \in \Z[T_0, \dots , T_i] \subset \Z[T_0, \dots , T_n]$.
	
	Define the map $\Em:\Wm{n}(B) \map W_{[m-1]n}(B^\phi)$ given by 
	\begin{align}
		\label{F_m}
		\Em(x)  = (F_0(x),\dots ,F_{m-1+n}(x)),
	\end{align}
	for any $x = (r_0,\dots, r_m, a_{m+1}, \dots, a_{m-1+n}) \in \Wm{n}(B)$.
	Since $B$ is an $R$-algebra and $F = (F_0,\dots F_{m+n})$ satisfy the required 
	axioms for the above map $\Em$ to be a ring homomorphism and we clearly have
	\begin{align}
		\xymatrix{
			\Wm{n}(B)\ar[d]_{\Em}  \ar[r]^{\Ima{n}} & W_{m+n}(B) \ar[d]^F\\
			W_{[m-1]n}(B^\phi) \ar[r]_{I_{[m-1]n}} & W_{m-1+n}(B^\phi).
		}
	\end{align}
}

\begin{proposition}
	The ring homomorphism $E_{[m]}:W_{[m]n}(B)\rightarrow W_{[m-1]n}(B^\phi)$ commutes with the ghost map and $E_{[m]_w}$ as defined in (\ref{F_m_w}). More precisely, the following diagram commutes:
	\begin{equation}\label{Frob_m_witt}
		\xymatrix{
			W_{[m]n}(B)\ar[r]^w\ar[d]_{E_{[m]}} & \prod_{[m]n}B\ar[d]^{E_{[m]_w}}\\
			W_{[m-1]n}(B^\phi)\ar[r]^w & \prod_{[m-1]n}B^\phi.
		}
	\end{equation}
\end{proposition}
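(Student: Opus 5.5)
The plan is to reduce the statement to the defining property of the Witt vector Frobenius, i.e.\ the commutative square relating $F$ and $F_w$ under the ghost map, using the fact that $E_{[m]}$ is given by the same polynomials $F_i$. Let $x=(x_0,\dots,x_{m+n})\in W_{[m]n}(B)$, with $x_0,\dots,x_m\in R$. The shifted ghost map has $i$-th component
\[ w_i(x)=x_0^{q^i}+\pi x_1^{q^{i-1}}+\cdots+\pi^i x_i, \]
the usual Witt polynomial. It lies in $R$ for $i\le m$ and in $B$ for $i>m$. Going right then down gives $E_{[m]_w}(w(x))=\langle w_1(x),\dots,w_{m+n}(x)\rangle$. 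Going down then right gives $w(E_{[m]}(x))$, whose $j$-th component is $w_j(F_0(x),\dots,F_j(x))$ for $0\le j\le m-1+n$. So it suffices to prove the polynomial identity
\[ w_j(F_0(\bx),\dots,F_j(\bx))=w_{j+1}(\bx) \]
for each $j$. One also has to check that both sides land in the correct factor: in $R^{\phi^j}$ for $j\le m-1$, and in $B^{\phi^j}$ otherwise.

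This identity is exactly the commutativity of the Frobenius square $w\circ F=F_w\circ w$ for the ordinary Witt vectors $W_{m+n}$. The key step is to establish it as an identity of polynomials. Take the universal case, where $W_{m+n}(A)\to W_{m+n-1}(A^\phi)$ with $A=R[\bx_0,\dots,\bx_{m+n}]$ and $\phi$ extended to $A$. This $A$ is $\pi$-torsion free, so the square holds there with $F$ given by the polynomials $F_i$ of (\ref{Frob_Witt}). Since $w_j$ and $F_i$ have coefficients in $R$ (indeed in $\Z$ up to $\pi$), the identity specializes along any ring map. Substituting $x$ then gives the componentwise equality.

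The compatibility of components also needs some care. For $j\le m-1$ all the variables involved are $x_0,\dots,x_{j+1}$ with $j+1\le m$, so both sides are computed in $R$. On the right side the component $w_{j+1}(x)$ sits in $R^{\phi^{j+1}}$, which as an abstract ring is just $R$. On the left side it is regarded in $W_{[m-1]n}(B^\phi)$, where the $j$-th ghost factor is $R^{\phi^j}$ with respect to the twisted structure map. For $j\ge m$ the identity holds in $B$ by the same specialization. Through $I_{[m]n}$ and $I_{[m-1]n}$, everything is compatible with the diagram preceding the statement. Combined with injectivity of $\prod$-inclusions, this gives a second route: the square follows from the Witt vector one after composing with ${I_{[m-1]n}}_w$.

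The main obstacle I expect is bookkeeping rather than mathematics. First, the $\phi$-twisted structures on $R^{\phi^i}$ and $B^\phi$ must be matched, so that $E_{[m]_w}$ really lands in $\prod_{[m-1]n}B^\phi$. Second, the $F_i$ as written involve $\phi$ applied to coefficients: in the $\pi$-typical setting with $\pi$-torsion in general, $F$ is defined universally. The cleanest resolution is to state the polynomial identity universally over a flat $R$-algebra and then specialize.
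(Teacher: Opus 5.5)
Your proof is correct, but your main line differs from the paper's. The paper never works with components. It chains ${I_{[m-1]n}}_w\circ w\circ E_{[m]}=w\circ I_{[m-1]n}\circ E_{[m]}=w\circ F\circ I_{[m]n}=F_w\circ w\circ I_{[m]n}=F_w\circ{I_{[m]n}}_w\circ w$, notes that $F_w\circ{I_{[m]n}}_w={I_{[m-1]n}}_w\circ E_{[m]_w}$, and then cancels ${I_{[m-1]n}}_w$. This is exactly the ``second route'' you sketch at the end.

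You instead evaluate the universal identity $w_j(F_0(\bx),\dots,F_j(\bx))=w_{j+1}(\bx)$ directly at the mixed tuple $(r_0,\dots,r_m,x_{m+1},\dots,x_{m+n})$. For $j\le m-1$ this is computed in $R$; for $j\ge m$ it is computed in $B$, implicitly using that the structure map $f\colon R\to B$ commutes with the $F_i$.

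The paper's argument is shorter and purely diagrammatic, but its last step needs ${I_{[m-1]n}}_w$ to be injective. On the first $m$ ghost factors this map is $f$, which need not be injective (e.g.\ $B=R/\pi^k$). The gap is easily repaired by handling the $R$-components separately with the Frobenius square for $W_m(R)$. Still, the ``injectivity of the $\prod$-inclusions'' you invoke for your second route is not automatic. Your componentwise route needs no such hypothesis and works for every $B$.

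One small clarification of your closing worries: no $\phi$ is applied to the coefficients of the $F_i$. They are pinned down by $w_j(F_0,\dots,F_j)=w_{j+1}$ with the untwisted Witt polynomials. So the $\phi$-twists only affect which $R$-algebra structures the targets carry, not the ring-level identities, and there is no need to extend $\phi$ to $A$.
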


\begin{proof}
	Let $(r,b)\in W_{[m]n}(B)$, then \begin{align*}
		{I_{[m-1]n}}_w\circ w\circ E_{[m]}(r,b) &=w\circ I_{[m-1]n}\circ E_{[m]}(r,b)=w\circ F\circ I_{[m]n}(r,b) \\&= F_w\circ w\circ I_{[m]n}(r,b)=F_w\circ {I_{[m]n}}_w\circ w(r,b).
	\end{align*}
	By the definition of $F_w, {I_{[m]n}}_w$ and $E_{[m]_w}$ we have $F_w\circ {I_{[m]n}}_w={I_{[m-1]n}}_w\circ E_{[m]_w}$. Therefore we obtain
	\begin{align*}
		{I_{[m-1]n}}_w\circ w\circ E_{[m]}(r,b)=F_w\circ {I_{[m]n}}_w\circ w(r,b)={I_{[m-1]n}}_w\circ E_{[m]_w}\circ w(r,b).
	\end{align*}
	This implies $E_{[m]_w}\circ w=w\circ E_{[m]}$ and hence (\ref{Frob_m_witt}) commutes.
\end{proof}
\color{black}

\section{Kernel of Jet Spaces}\label{KJS}

Let $(X,P)$ denote a $\pi$-formal scheme $X$ with a marked $R$-point $P\in X(R)$. Then by composing the universal map$$R\xrightarrow{\exp_{\delta}}W_n(R)$$ with $P$, we naturally obtain an $R$-marked point $P^n$ on $J^nX$, which we will denote by $(J^nX,P^n)$. If $X=\Spf A$, let us denote $\iota^*:A\rightarrow R$ to be the ring map associated to $P$. If we denote the ring map of $P^n$ as $\iota_n^*:A\rightarrow W_n(R)$, then we have $\iota_n^*=\exp_\delta\circ \iota^*.$ 

For each $n$, we have the following commutative diagram
$$
\xymatrix{
	J^{n+1}X \ar[ddr] \ar@/^1pc/[rrd]^\phi \ar[rd]^{\phi_{\mathrm{rel}}}
	&  & \\
	& J^nX \times_{S,\phi} S \ar[d] \ar[r] & J^nX \ar[d] \\
	& S \ar@/_1pc/[u]_{P^n_{\phi}} \ar@/^2pc/[uul]^{P^{n+1}} \ar[r]^\phi & 
	S \ar@/_1pc/[u]_{P^n}
}
$$
where $P^n_{\phi} \in J^nX \times_{S,\phi} S (S) \simeq J^nX(S^\phi)$ is the 
pullback of the section $P^n$ via $\phi$.

For all $n\geq 0$, we define $N^{[m]n}X:=J^{m+n}X\times_{J^mX}S$ which is the following fiber product
$$
\xymatrix{
	J^{m+n}X\ar[d] & N^{[m]n}X\ar[d]\ar[l]_{\iota_m}\\
	J^mX & S.\ar[l]^{P^m}
}
$$
Then clearly $N^{[m]n}X=\Spf N_{[m]n}A$, where $N_{[m]n}A=J_{m+n}A\otimes_{J_mA}R$. Also functorially $N^{[m]n}X $ can be described as 
\begin{align}
	\label{g_map}
	N^{[m]n}X(B)=\{g:A\rightarrow W_{[m]n}(B)|&\text{ where if }g=(g_0,\cdots,g_m,\cdots, g_{m+n}),\notag\\& \text{ then }(g_0,\cdots,g_m)=P^{m}\}.
\end{align}
Note that the usual projection map $u_m:J^{m+n}X\rightarrow J^{m-1+n}X$ naturally induces $u_m:N^{[m]n}X\rightarrow N^{[m]n-1}X$ for all $n\geq 1$ and $m \geq 0.$ 

We define the generalized lateral Frobenius,  $\mf{f}_m:N^{[m]n}X\rightarrow N^{[m]n-1}X$ as $$\mf{f}_m(B)(r)=r\circ{F}_{[m]} \text{ for all } r\in N^{[m]n}X(B),$$ where $B$ is any $\pi$-adically complete $R$-algebra and denote (by a 
slight abuse
of notation) ${F}_{[m]}:\Spf W_{[m]n-1}(B^\phi)\rightarrow \Spf W_{[m]n}(B)$ is the morphism 
induced by the lateral Frobenius ring map $F_{[m]}:W_{[m]n}(B)
\rightarrow W_{[m]n-1}(B^\phi)$.


By Theorem 5.3 in \cite{PaSa}, it is shown that $\mf{f}_m$ is a lift of Frobenius and it satisfies 
\begin{equation}\label{lat_Frob}
	\phi^{m+j}\circ \iota_m=\phi^{m+j-1}\circ \iota_m\circ \mf{f}_m,
\end{equation}
for all $2\leq j\leq n.$\\

\cblue{

	For any $\pi$-adic $R$-algebra $B$, the
	ring map $E_{[m]}: W_{[m]n}(B) \map W_{[m-1]n}(B^\phi)$ induces the map
	\begin{align}
		\phm : X(W_{[m]n}(B)) \map X(W_{[m-1]n}(B^\phi))
	\end{align}
	which satisfies the following
	$$
	\xymatrix{
		X(\Wm{n}(B)) \ar[rr]^{X(E_{[m]})=:\phm}  \ar[d]_{u_m} & & X(W_{[m-1]n}(B^\phi))
		\ar[d]^{u_m}\\
		X(W_m(R)) \ar[rr]^{X(F) =: \phi}  \ar[d] & & X(W_{m-1}(R^\phi)) \ar[d] \\
		S(R) \ar@/^1pc/[u]^{P^m} \ar[rr]^\phi & & S(R^\phi). \ar@/_1pc/[u]_{P^{m-1}}
	}
	$$
	Hence by (\ref{g_map}),
	for any $\pi$-formal $R$-algebra $B$, we have the morphism 
	\begin{align}
		\label{Phi_[m]}
		\phm : N^{[m]n}X(B) \map X(W_{[m-1]n}(B^\phi))
	\end{align}
	that induces a morphism $\phm : N^{[m]n}X \map N^{[m]n-1}X$ over $\phi : S 
	\map S$.
}

\begin{theorem}
	\label{JnNm}
	Let $X$ be a $\pi$-formal scheme over $\Spf R,$ then for all $n\geq 1$ we have $$N^{[m]n}X\simeq J^{n-1}(N^{[m]1}X).$$
\end{theorem}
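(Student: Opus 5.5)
The plan is to compare functors of points and then use Yoneda. For a $\pi$-adically complete $R$-algebra $B$, the definition of $J^{n-1}$ applied to the $\pi$-formal scheme $N^{[m]1}X$ gives
$$J^{n-1}(N^{[m]1}X)(B)=N^{[m]1}X(W_{n-1}(B)).$$
By (\ref{g_map}) this is the set of ring maps $g:A\to W_{[m]1}(W_{n-1}(B))$ whose first $m+1$ components agree with $P^m$. Also by (\ref{g_map}), $N^{[m]n}X(B)$ is the set of ring maps $A\to W_{[m]n}(B)$ with the same constraint. The theorem therefore reduces to building a natural ring isomorphism
$$W_{[m]n}(B)\;\simeq\; W_{[m]1}(W_{n-1}(B))$$
that is compatible with the projections to $W_m(R)$. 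Under this isomorphism the $P^m$-constraints on the two sides correspond, and Yoneda gives the isomorphism of $\pi$-formal schemes.

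To get the ring isomorphism, write both rings as fiber products:
$$W_{[m]n}(B)=W_m(R)\times_{W_m(B)}W_{m+n}(B),\qquad W_{[m]1}(C)=W_m(R)\times_{W_m(C)}W_{m+1}(C),\quad C=W_{n-1}(B).$$
The key input is the Lubin--Tate/Joyal comparison map $\Delta: W_{m+n}(B)\to W_{m}(W_{n}(B))$, more precisely the iterated-Witt-vector "Cartier diagonal" relating $W_{a+b}$ and $W_a\circ W_b$. This map is not an isomorphism, so it cannot be used naively.

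Instead I would use the known structure: $W_{[m]n}(B)$ carries a canonical $\pi$-derivation (whose Frobenius is the lateral Frobenius $F_{[m]}$ of Theorem \ref{lat_Frob1}). Hence $W_{[m]n}(B)$ is a $\delta$-ring over $W_{[m]1}(B)$ via the truncation. By the universal property of Witt vectors as the right adjoint to the forgetful functor from $\delta$-rings, the structure yields a ring map $W_{[m]n}(B)\to W_{n-1}(W_{[m]1}(B))$, and this should be compared with $W_{[m]1}(W_{n-1}(B))$. Concretely, I would define the map by the lateral ghost components $\langle F_{[m]}^{j}\rangle_{j\le n-1}$ and check bijectivity coordinatewise. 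The check rests on Theorem \ref{lat_Frob1}: since $\tilde F_h\equiv x_h^q \bmod \pi$, the Witt-coordinates of the lateral Frobenius iterates are triangular in the new variables $x_{m+1},\dots,x_{m+n}$, with the top coordinate entering linearly through the lateral Witt polynomials. This lets us solve recursively for those variables, first for flat $B$ via ghost components and then in general by functoriality of the integral polynomials.

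In the end both functors are representable by formal schemes whose rings are built from $A$. At that level the statement says that $N_{[m]n}A$ is the universal $\delta$-prolongation of order $n-1$ of $N_{[m]1}A$ with respect to the lateral derivation, so by the universal property of $J_{n-1}$ there is a natural map $J_{n-1}(N_{[m]1}A)\to N_{[m]n}A$. Surjectivity comes from generation by lateral derivatives, and injectivity from matching the free generators coming from the $x_{m+j}$-coordinates.

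The main obstacle is the triangularity/invertibility step. One has to show that the lateral Frobenius $F_{[m]}$ makes the top new coordinate of $N_{[m]j}A$ equal, up to a unit and lower-order terms, to the lateral $\pi$-derivative of the previous one. If $\pi$-divisibility issues arise, I would first work with $B$ $\pi$-torsion free, where the ghost map is injective and the lateral ghost computations are transparent, and then descend using that all structure maps are given by integral polynomials.
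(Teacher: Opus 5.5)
\textbf{There is a genuine gap.} Your opening reduction is correct and well chosen. By (\ref{g_map}), the theorem follows by Yoneda from a natural ring isomorphism $W_{[m]n}(B)\simeq W_{[m]1}(W_{n-1}(B))$ that is compatible with the projections to $W_m(R)$. The paper gives no argument of its own here; it only cites \cite[Theorem 7.8]{PaSa}. But that isomorphism is the whole content of the statement, and your proposal never constructs it.

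\emph{The adjunction route cannot supply it.} $F_{[m]}$ goes from $W_{[m]n}(B)$ to $W_{[m]n-1}(B^\phi)$, so $W_{[m]n}(B)$ is not a $\delta$-ring; only the family $\{W_{[m]n}(B)\}_n$ is a prolongation sequence. What the adjunction gives is a map into $W_{n-1}(W_{[m]1}(B))$, with the two functors composed in the opposite order. That is not the ring occurring in $N^{[m]1}X(W_{n-1}(B))$. Like the Cartier diagonal you discarded, this map is not surjective: on ghost components it forces the $R$-part of the $j$-th block to be $\phi^j$ of the $R$-part of the $0$-th block. So no coordinatewise bijectivity check can succeed.

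\emph{The triangularity is not there.} The congruence $\tilde F_h\equiv x_h^q \bmod \pi$ does not give it. The new variable enters $F_{[m]}$ with coefficient $\pi$, e.g.\ $\tilde F_{m+n-1}=\pi x_{m+n}+(\text{terms in }x_0,\dots,x_{m+n-1})$, so you cannot solve for it integrally. The unit-triangularity of a lateral $\pi$-derivative is exactly the step you leave open as the ``main obstacle''.

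\emph{The universal-property fallback only covers smooth $X$.} It does produce a comparison map. But ``matching free generators'' presupposes coordinates as in (\ref{Nmn_Witt}), i.e.\ that $X$ is smooth. The theorem is asserted for an arbitrary $\pi$-formal scheme.

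\textbf{How to close the gap.} The missing isomorphism can be built directly from ghost components, with no smoothness hypothesis.
\begin{itemize}
\item Write elements as $(r,x'')$ with $r=(r_0,\dots,r_m)\in R^{m+1}$ and $x''=(x_{m+1},\dots,x_{m+n})$, and respectively as $(r,c)$ with $c\in W_{n-1}(B)$.
\item For $\pi$-torsion-free $B$, both rings embed into $\prod_{[m]n}B$. The first embeds by its ghost map. The second embeds by $(r,c)\mapsto\langle w_0(r),\dots,w_m(r),w(y)\rangle$, where $y=\sum_{i\le m}\pi^i\exp_\delta(r_i)^{q^{m+1-i}}+\pi^{m+1}c$.
\item Since $(r,x'')=(r,0)+V^{m+1}(x'')$ and $F^{m+1}V^{m+1}=\pi^{m+1}$, the last $n$ ghost components of $(r,x'')$ are $w\bigl(F^{m+1}(r,0)+\pi^{m+1}x''\bigr)$.
\item So the two images coincide once $D(r):=F^{m+1}(r,0)-\sum_{i\le m}\pi^i\exp_\delta(r_i)^{q^{m+1-i}}\in W_{n-1}(R)$ is divisible by $\pi^{m+1}$.
\item The ghost components of $D(r)$ are $D_j=\sum_{i\le m}\pi^i\bigl(r_i^{q^{m+1+j-i}}-\phi^j(r_i)^{q^{m+1-i}}\bigr)$. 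They satisfy $D_0=0$ and $D_j\equiv\phi(D_{j-1})\bmod \pi^{m+1+j}$, because $\phi(r)^{q^k}\equiv r^{q^{k+1}}\bmod\pi^{k+1}$.
\item Dwork's lemma applied to $\pi^{-(m+1)}D_j$ then gives the required divisibility.
\item Hence $(r,x'')\mapsto\bigl(r,\,x''+\pi^{-(m+1)}D(r)\bigr)$ is a natural bijection. It is a ring isomorphism for flat $B$, hence for all $B$ (surject a flat $\pi$-adically complete algebra onto $B$), and it is the identity on $r$.
\end{itemize}
Your Yoneda step then finishes the proof for arbitrary $X$.
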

\begin{proof}
	We refer \cite{PaSa} Theorem 7.8 for the proof.
\end{proof}




We have $(X,P)$ to be a $R$-pointed smooth $\pi$-formal scheme of relative dimension $g$ over $S$.  We define $${\prod}_{[m]n}X:=S\times_{X,\phi} S^\phi\times_{X,\phi^2} \cdots \times_{X,\phi^{m-1}} S^{\phi^m}\times_{X,\phi^m} X^{\phi^{m+1}}\times \cdots \times X^{\phi^{m+n}} $$ where the maps from $S^{\phi^i}\xrightarrow{P} X$ is the composition of $S\xrightarrow{\phi^i} S\xrightarrow{P} X$ and for all $0\leq i\leq m$ and the fiber product can be seen as
$$
\xymatrix{
	\prod_{[m]n}X \ar[r]^-{i_w}\ar[d] & X^{\phi^{m+1}}\times\cdots\times X^{\phi^{m+n}}\ar[d]^{u_{w}}\\
	S\times_{X,\phi} \cdots \times_{X,\phi^{m-1}} S^{\phi^m} \ar[r] & X.
}
$$ 
Here $i_w:\prod_{[m]n}X\rightarrow  X^{\phi^{m+1}}\times\cdots\times X^{\phi^{m+n}}$ is the induced morphism. Hence any point $z\in \prod_{[m]n}X$ can be expressed as $z=\langle P,\cdots,P,z_{m+1},\cdots,z_{m+n}\rangle$, where $z_i\in X^{\phi^i}$.

We have the associated morphism $\mf{f}_{m_w}:\prod_{[m]n}X\rightarrow \prod_{[m]n-1}X$ corresponding to  $\mf{f}_{m_w}$ respectively, given by 
\begin{align}
	\mf{f}_{m_w}\langle P,\cdots,P,z_{m+1},\cdots,z_{m+n}\rangle&= \langle P,\cdots,P,\phi_S(z_{m+2}),\cdots,\phi_S(z_{m+n})\rangle.
\end{align}
Hence we have the following morphism of $\pi$-formal  schemes
\begin{equation}\label{iota_m}
	\xymatrix{
		 N^{[m]n}X\ar[r]^{\iota_m}\ar[d]_w & J^{m+n}X\ar[r]^{u_m}\ar[d]_w & J^mX\ar[d]_w\\
		 \prod_{[m]n}X\ar[r]^{{\iota_m}_w} & \prod_\phi^{m+n}X\ar[r]^{{u_m}_w} & \prod_\phi^m X,
	}
\end{equation}
where $u_m:J^{m+n}X\rightarrow J^mX$ is the composition of the natural projection map $u:J^{m+n}X\rightarrow J^{m+n-1}X$, $n$ times to descend down to $J^mX$ and in the ghost side, 
\begin{align}
	&{\iota_m}_w\langle P,\cdots,P,z_{m+1},\cdots,z_{m+n}\rangle=\langle P,\cdots,P,z_{m+1},\cdots,z_{m+n}\rangle,\notag\\
	& {u_m}_w\langle z_0,\cdots,z_{m+n}\rangle=\langle z_{0},\cdots,z_{m}\rangle.
\end{align}	
We have a closed immersion of $\pi$-formal schemes $i_m:\Spf W_m(R)\longrightarrow \Spf W_{[m]n}(B),$ for any $\pi$-adically complete $R$-algebra $B$. Therefore, given a $\pi$-adically completed $R$-algebra $B$, extending the expression in (\ref{g_map}) for any group scheme,  we have 
\begin{equation}
	\label{N_mnG}
	N^{[m]n}X(B)=\{f\in X(W_{[m]n}(B))|f\circ i_m=P^m\}.
\end{equation}

\begin{theorem}
	Let $X$ be a $\pi$-formal scheme over $\Spf R,$ then for all $n\geq 1$ we have $$N^{[m]n}X\simeq J^{n-1}(N^{[m]1}X).$$
\end{theorem}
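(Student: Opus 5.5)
The statement is the same as Theorem \ref{JnNm}, and that theorem is quoted from \cite{PaSa}, Theorem 7.8. So the shortest route is to invoke it. For a self-contained argument I would instead compare functors of points. By the description (\ref{N_mnG}), for any $\pi$-adically complete $R$-algebra $B$,
$$J^{n-1}(N^{[m]1}X)(B)=N^{[m]1}X(W_{n-1}(B))=\{f\in X(W_{[m]1}(W_{n-1}(B))) \mid f\circ i_m=P^m\}.$$
On the other side,
$$N^{[m]n}X(B)=\{g\in X(W_{[m]n}(B)) \mid g\circ i_m=P^m\}.$$
The theorem therefore follows once I have a natural ring map
$$\Delta: W_{[m]n}(B)\longrightarrow W_{[m]1}(W_{n-1}(B)),$$
compatible with the structure maps from $W_m(R)$, which induces a bijection on the sets of points above.

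To build $\Delta$, I would use the Cartier-type diagonal $W_{m+n}(B)\to W_m(W_{n}(B))$ from the Witt vector composition $W_a\circ W_b\to W_{a+b}$. Restricted to the fibre product $W_m(R)\times_{W_m(B)}W_{m+n}(B)$, it should land in $W_{[m]1}(W_{n-1}(B))=W_m(R)\times_{W_m(W_{n-1}(B))}W_{m+1}(W_{n-1}(B))$. I would check this first on ghost components, where everything is explicit. An element of $\prod_{[m]n}B$ is $\langle s_0,\dots,s_m,z_{m+1},\dots,z_{m+n}\rangle$. The target ghost ring is $\prod_{[m]}R\times \prod^{n-1}_\phi B^{\phi^{m+1}}$. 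The map sends $\langle s_i\rangle$ to itself and $(z_{m+1},\dots,z_{m+n})$ to the single ghost entry in slot $m+1$ with values in $\prod^{n-1}_\phi B^{\phi^{m+1}}$.

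Next I would show that $\Delta$ is integral, i.e.\ defined by polynomials over $R$. I would argue universally: take $B$ a flat $\pi$-adically complete polynomial-type algebra, where ghost maps are injective, and use the standard Dwork-lemma argument of \cite{BoSa1}, Section 3, together with the $\delta$-structure on $W_{[m]n}$ from Theorem \ref{lat_Frob1}.

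The main obstacle is bijectivity on the relevant points. The rings $W_{[m]n}(B)$ and $W_{[m]1}(W_{n-1}(B))$ are not isomorphic. Their underlying sets are $R^{m+1}\times B^n$ and $R^{m+1}\times W_{n-1}(B)$, and $W_{n-1}(B)$ has $n$ coordinates, so they agree only as sets; the ring structures differ. So $\Delta$ will not be an isomorphism, and I must use smoothness or the fibre condition $f\circ i_m=P^m$. I would reduce to the case $X=\Spf A$ affine, since both sides are Zariski sheaves. I would then compare $N_{[m]n}A=J_{m+n}A\otimes_{J_mA}R$ with $J_{n-1}(N_{[m]1}A)$ through their universal properties as prolongation algebras. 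The key point to verify is that the $\delta$ on $N_{[m]1}A$ coming from the lateral Frobenius $\fra_m$ generates $N_{[m]n}A$ freely in the way $J_{n-1}$ requires. Concretely, the lateral Frobenius identity (\ref{lat_Frob}) should match the iterated $\delta$'s of the degree-$(m+1)$ coordinates with the higher coordinates $x_{m+2},\dots,x_{m+n}$, up to invertible triangular corrections. This is where I expect the real work, and I would follow the inductive argument on $n$ in \cite{PaSa}, Section 7.
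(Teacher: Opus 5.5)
Your first move, citing \cite{PaSa}, Theorem 7.8, is exactly what the paper does; the paper gives no argument of its own. Your self-contained sketch, however, fails at its first step. For $J^{n-1}$, the $R$-algebra structure on $W_{n-1}(B)$ is $\exp_\delta$. So the slot-$i$ components ($0\le i\le m$) of an element of $W_{[m]1}(W_{n-1}(B))$ must have ghost vectors of the form $(s,\phi(s),\dots,\phi^{n-1}(s))$. The Cartier diagonal instead puts $(w_i,\dots,w_{i+n-1})$ in slot $i$, so it does not map $W_{[m]n}(B)$ into $W_{[m]1}(W_{n-1}(B))$. Already for $m=0$, $n=2$, the slot-$0$ component is $(r_0,x_1)\in W_1(B)$. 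This equals $\exp_\delta(r_0)=(r_0,\delta r_0)$ only when $x_1=\delta r_0$. The ghost-level map you describe, the identity on slots $0,\dots,m$, is therefore not the ghost map of the diagonal.

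More importantly, your claim that $W_{[m]n}(B)$ and $W_{[m]1}(W_{n-1}(B))$ are not isomorphic is unsupported and false. It is what pushes you into the unexecuted detour through prolongation algebras, which ends by deferring the key step back to \cite{PaSa}. That detour also leans on smoothness, which is unavailable since $X$ is arbitrary. The ghost identification you wrote down does lift to an isomorphism. Let $\underline r=(r_0,\dots,r_m,0,\dots,0)\in W_{m+n}(R)$ with ghost components $w_k$, and set
\[
h_j=\pi^{-(m+1)}\bigl(w_{m+1+j}(\underline r)-\phi^{j}(w_{m+1}(\underline r))\bigr),\qquad 0\le j\le n-1.
\]
Since $w_{k+1}\equiv\phi(w_k)\bmod \pi^{k+1}$, telescoping gives $h_j\in R$. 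Moreover
\[
h_j-\phi(h_{j-1})=\pi^{-(m+1)}\bigl(w_{m+1+j}-\phi(w_{m+j})\bigr)\in\pi^jR,
\]
so by Dwork's lemma there is a unique $\eta(r)\in W_{n-1}(R)$ with ghost vector $(h_0,\dots,h_{n-1})$. Now consider the map $(r,x)\mapsto\bigl(r,\eta(r)+(x_{m+1},\dots,x_{m+n})\bigr)$.

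\begin{itemize}
\item It is the identity on ghost components. The $j$-th ghost entry of slot $m+1$ on the right is $\phi^j(w_{m+1}(\underline r))+\pi^{m+1}h_j+\pi^{m+1}w_j(x)=w_{m+1+j}(r,x)$, where $w_j(x)$ is the $j$-th ghost component of $(x_{m+1},\dots,x_{m+n})\in W_{n-1}(B)$.
\item It is bijective, with inverse $y\mapsto y-\eta(r)$.
\item It is given by universal formulas.
\end{itemize}

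Hence it is a natural $R$-algebra isomorphism $W_{[m]n}(B)\simeq W_{[m]1}(W_{n-1}(B))$ compatible with the projections to $W_m(R)$. Substituting this into (\ref{N_mnG}) and applying Yoneda proves the theorem directly for every $\pi$-formal scheme $X$.
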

\begin{proof}
	We refer \cite{PaSa} Theorem 7.8 for the proof.
\end{proof}

\begin{lemma}\label{Affine_Nmn}
	Let $(X,P)$ as given above. Suppose $U$ be an affine open subscheme of $X$ containing $P$ such that there exists an \'etale morphism $v:U\rightarrow H$, where $H=\Spf R[\bx_0]^{\hat{•}}$ and $\bx_0=(x_{01},\cdots, x_{0g})$ are coordinates such that $v(P)$ in $H$ is given by setting $x_{01}=\cdots=x_{0g}=0.$ Then $$N^{[m]n}X\simeq N^{[m]n}H.$$
\end{lemma}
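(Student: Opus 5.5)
The plan is to prove the two isomorphisms $N^{[m]n}X\simeq N^{[m]n}U$ and $N^{[m]n}U\simeq N^{[m]n}H$ separately, working throughout with the functorial description (\ref{N_mnG}). For a $\pi$-adically complete $R$-algebra $B$, the relevant points are maps $f:\Spf W_{[m]n}(B)\to X$ with $f\circ i_m=P^m$. The key input is the structure of the closed immersion $i_m:\Spf W_m(R)\to \Spf W_{[m]n}(B)$. Its ideal is $\ker\bigl(W_{[m]n}(B)\to W_m(R)\bigr)=V^{m+1}W_{n-1}(B)$, i.e. the Witt vectors whose first $m+1$ components vanish. The ghost components of such vectors are divisible by $\pi^{m+1}$, and I expect this ideal to be nilpotent modulo $\pi$ (indeed $(V^{m+1}x)(V^{m+1}y)=\pi^{m+1}V^{m+1}(x\cdot F^{m+1}V^{m+1}y)$ up to the usual twists). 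So $W_{[m]n}(B)\to W_m(R)$ is a $\pi$-adic nilpotent thickening. Equivalently, $W_{[m]n}(B)$ is $\pi$-adically complete and the kernel is topologically nilpotent.

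\textbf{Step 1: $N^{[m]n}X\simeq N^{[m]n}U$.} Since the underlying topological space of $\Spf W_{[m]n}(B)$ coincides with that of $\Spf W_m(R)$ (the thickening is nilpotent mod $\pi$), any $f$ with $f\circ i_m=P^m$ has image inside the open set containing $P$, hence inside $U$. Thus $f$ factors uniquely through $U$. This identification is functorial in $B$, so $N^{[m]n}U\to N^{[m]n}X$ is an isomorphism of functors, hence of $\pi$-formal schemes. Concretely, the image of $\Spf W_m(R)\to X$ lies in $U$ because $J^mU\subset J^mX$ is open and $P^m$ comes from $P\in U(R)$.

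\textbf{Step 2: $N^{[m]n}U\simeq N^{[m]n}H$.} I would use the formal infinitesimal lifting property of the \'etale map $v$ applied to the thickening $W_{[m]n}(B)\to W_m(R)$. Given $h:\Spf W_{[m]n}(B)\to H$ with $h\circ i_m=v(P)^m=v\circ P^m$, étaleness gives a unique $f:\Spf W_{[m]n}(B)\to U$ with $v\circ f=h$ and $f\circ i_m=P^m$. This is done by lifting successively modulo $\pi^k$, where the ideal becomes nilpotent, and passing to the limit by completeness. Conversely, $f\mapsto v\circ f$ sends $N^{[m]n}U(B)$ to $N^{[m]n}H(B)$. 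Uniqueness of lifts makes the two maps mutually inverse and natural in $B$. One also needs $v(P)^m=v\circ P^m$, which holds by functoriality of $\exp_\delta$, and this point of $J^mH$ is the zero section since $v(P)$ is the origin.

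The main obstacle is the nilpotence claim: showing that the kernel of $W_{[m]n}(B)/\pi\to W_m(R)/\pi$ is nilpotent, so that the infinitesimal lifting criterion applies over each $W_{[m]n}(B)/\pi^k$. I would verify this through the ghost map when $B$ is $\pi$-torsion free, using the formula for products of Verschiebung images and divisibility of $\pi$. For general $B$ I would reduce to the torsion-free case by functoriality, passing to the universal case of a polynomial ring completion.
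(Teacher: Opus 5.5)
Your proposal is correct, but it takes a different route from the paper's proof.

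\textbf{The paper's route.} The paper argues only with fibre products of jet spaces:
\[
N^{[m]n}X=J^{m+n}X\times_{J^mX}S\simeq J^{m+n}U\times_{J^mU}S\simeq (J^{m+n}H\times_{J^mH}J^mU)\times_{J^mU}S\simeq J^{m+n}H\times_{J^mH}S .
\]
It uses two standard facts that it does not prove. The first is that $J^{m+n}U$ is the preimage of the open $J^mU\subset J^mX$, through which $P^m$ factors. The second is \'etale base change for arithmetic jets, $J^{m+n}U\simeq J^{m+n}H\times_{J^mH}J^mU$.

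\textbf{Your route.} You work with the functor of points $N^{[m]n}X(B)=\{f\in X(W_{[m]n}(B))\mid f\circ i_m=P^m\}$. You prove both steps directly for the single thickening $W_{[m]n}(B)\to W_m(R)$. The homeomorphism argument, which places the image inside $U$, replaces the first fact. The infinitesimal lifting criterion for $v$, applied modulo each $\pi^k$ and then passed to the limit, replaces \'etale base change.

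\textbf{Comparison.} Your argument is self-contained. The usual proof of \'etale base change for $J^n$ is the same lifting argument, applied to $W_{m+n}(B)\to W_m(B)$ instead. The paper's argument is shorter. It also gives a slightly stronger statement: the whole map $J^{m+n}U\to J^mU$ is pulled back from $J^{m+n}H\to J^mH$, not just its fibre over $P^m$.

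\textbf{A small slip.} Your product formula has a spurious factor of $\pi^{m+1}$. The correct identity, from $V(a)\,b=V(a\,F(b))$ and $FV=\pi$, is
\[
V^{m+1}(x)\,V^{m+1}(y)=V^{m+1}\bigl(x\cdot F^{m+1}V^{m+1}y\bigr)=\pi^{m+1}V^{m+1}(xy).
\]
This holds for every $B$, so no reduction to the $\pi$-torsion-free case is needed. It gives $I^2\subseteq \pi^{m+1}I$ for the kernel $I$ of $W_{[m]n}(B)\to W_m(R)$. Hence $I$ is square-zero modulo $\pi$ and nilpotent modulo every $\pi^k$, so the step you flagged as the main obstacle is immediate.
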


\begin{proof}
	We have $P\in U$ and $U$ is an affine open subscheme of $X$. This implies $P$ is an $R$-marked point of $U$ as well and $v\circ P$ gives us a morphism $S\rightarrow H.$ Hence we obtain
	\begin{align}\label{Witt cord}
		N^{[m]n}X& =J^{m+n}X\times_{J^m X} S \notag \\
		&\simeq J^{m+n}U\times_{J^mU} S\notag\\
		& \simeq(J^{m+n}H\times_{J^mH} J^mU)\times_{J^mU}S\notag\\ 
		& \simeq J^{m+n}H\times_{J^mH}S= N^{[m]n}H.
	\end{align}
\end{proof}

Let $\mathbb{W}_n$ denote the Witt vectors of length $n+1$ as a group object in the category of $\pi$-formal schemes. Then $\mathbb{W}_n\simeq \Spf R[x_0,\cdots,x_n]^{\hat{}}$ and the ring $R[x_0,\cdots,x_n]^{\hat{}}$ has a co-algebra structure induced from the Witt vector addition.

\color{black}
Let us assume $G$ to be a smooth commutative $\pi$-formal group scheme over $S$ of relative dimension $g$. Let $\bx_0=(x_{01},\cdots,x_{0g})$ be a system of \'etale coordinates at identity section $e$ of $G$ and define $H=\Spf R[\bx_0]^{\hat{}}$. Then we have an \'etale morphism of $\pi$-formal schemes $U\rightarrow H$ for some open affine subscheme $U$ of $G$ containing $e$.  Following the arguments in \cite{BoSa1} Page 406, we have $J^nH\simeq \mW_n^g$ and $J^nH(B)=\Hom_R(R[\bx_0]^{\hat{}},W_n(B))$ for any $\pi$-adically complete $R$-algebra $B$. Hence $$J^nH\simeq \Spf R[\bx_0,\cdots,\bx_n]^{\hat{}}$$ where  
$(\bx_0,\cdots,\bx_n)$ is the image of $\bx_0$ in $W_n(B)$ and $\bx_i=(x_{i1},\cdots,\bx_{ig})$. Then $(\bx_0,\cdots,\bx_n)$
forms an \'etale coordinate system of $J^nG$ around identity and we will call them local Witt coordinates of length $n+1$ for $G$. For detailed discussion we refer \cite{BoSa1} Page 406. From Lemma \ref{Affine_Nmn}, we have $
N^{[m]n}G\simeq N^{[m]n} H=J^{m+n}H\times_{J^mH}S.$ Here the fibre product is taken along the identity section $S\xrightarrow{e} J^mH$, which at the level of maps between $R$-algebras   $R[\bx_0,\cdots,\bx_m]\rightarrow R$, is given by $\bx_i\mapsto 0$ for all $0\leq i\leq m$. Hence we have 
\begin{equation}\label{Nmn_Witt}
	N^{[m]n}G\simeq N^{[m]n} H=J^{m+n}H\times_{J^mH}S\simeq \Spf R[\bx_{m+1},\cdots,\bx_{m+n}]^{\hat{}}.
\end{equation}

\color{black}
\begin{theorem}\label{N_mn}
	Let $G$ be a $\pi$-formal smooth commutative group scheme of relative dimension $g$ and $v_\pi(p)\leq p-2$ then, $$N^{[m]n}G\simeq \mathbb{W}_{n-1}^g$$ as $\pi$-formal group schemes. In particular, we have the following short exact sequence of $\pi$-formal group schemes $$0\rightarrow (\mathbb{W}_{n-1})^g\rightarrow J^{m+n}G\rightarrow J^mG\rightarrow 0.$$
\end{theorem}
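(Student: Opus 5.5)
The plan is to follow the strategy of \cite{PaSa}: first reduce to the case $n=1$ using the prolongation structure, then identify $N^{[m]1}G$ with $\hat{\mathbb{G}}_a^g$ as a $\pi$-formal group, and finally apply $J^{n-1}$.

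\emph{Reduction to $n=1$.} By Theorem \ref{JnNm} we have $N^{[m]n}G\simeq J^{n-1}(N^{[m]1}G)$. This isomorphism is functorial in $G$; it is induced by the natural identification $W_{[m]n}(B)\simeq W_{n-1}$ applied to the relevant first-level ring. Hence it respects the group structures, since the group law on $N^{[m]n}G$ is induced from that of $J^{m+n}G$, i.e. from $G(W_{[m]n}(B))$. Since $J^{n-1}\hG\simeq \mathbb{W}_{n-1}$ as group schemes, it suffices to construct a group isomorphism $\Psi^{[m]}_1:N^{[m]1}G\simeq \hG^g$.

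\emph{The case $n=1$.} By (\ref{Nmn_Witt}), as a $\pi$-formal scheme $N^{[m]1}G\simeq \Spf R[\bx_{m+1}]^{\hat{}}$, an affine $g$-plane. The remaining task is to linearize the induced group law. I would pass to the ghost side via diagram (\ref{iota_m}), or equivalently use the logarithm of the formal group of $G$ at $e$. A point of $N^{[m]1}G$ has ghost components $\langle e,\dots,e,z_{m+1}\rangle$, and $z_{m+1}=\pi^{m+1}\bx_{m+1}+(\text{higher order terms})$ in formal group coordinates. Over $K$, set
\[
\Psi^{[m]}_1=\pi^{-(m+1)}\log_G(z_{m+1}).
\]
This map is additive because $w$ is a homomorphism and $\log_G$ linearizes the group law.

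\emph{Main obstacle: integrality.} The key step is to show that $\Psi^{[m]}_1$ has coefficients in $R$ and is of the form $\bx_{m+1}+(\text{higher order})$, which makes it an isomorphism of formal planes. Expanding $\log_G(z)=\sum_k a_k z^k$, the coefficients $a_k$ have denominators bounded by $v_\pi(k)$ roughly. The term $z_{m+1}^k$ is divisible by $\pi^{k(m+1)}$ modulo the Witt-polynomial structure, where the entries $x_0,\dots,x_m$ vanish at the identity. We then need
\[
k(m+1)-v_\pi(a_k^{-1})\ge m+1,
\]
and more precisely need to control the $q$-power contributions. This is where $v_\pi(p)\le p-2$ enters: it guarantees $v_\pi(k!)$-type estimates such as $e\,v_p(k)\le k-1$ with room to spare, exactly as in the standard proof that $\log$ and $\exp$ converge on $\pi\mathcal{O}$ when $e<p-1$.

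If this direct estimate becomes messy, I would instead adopt the \cite{PaSa} approach. There, one shows that the kernel of $J^{m+1}G\to J^mG$ is killed appropriately and constructs the map via the $\delta$-character/Witt coordinate expansion, checking inductively on coefficients that the formal group law on $\Spf R[\bx_{m+1}]^{\hat{}}$ is isomorphic to the additive one over $R$. Finally, the exact sequence follows from the definition of $N^{[m]n}G$ as the kernel of $u_m$, together with the surjectivity of $u_m$, which comes from the smoothness of $G$ (formal lifting along $W_{m+n}(B)\to W_m(B)$).
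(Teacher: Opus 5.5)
Your route matches the argument the paper relies on: reduce via Theorem \ref{JnNm} to $N^{[m]n}G\simeq J^{n-1}(N^{[m]1}G)$, build $\Psi_1^{[m]}=\pi^{-(m+1)}\log_{G,\phi^{\circ(m+1)}}(\pi^{m+1}\mathbf{x}_{m+1})\colon N^{[m]1}G\to\hG^g$, show it is integral and invertible using $v_\pi(p)\leq p-2$, then apply $J^{n-1}$ (the paper cites \cite{PaSa} for this theorem, and uses the same $\Psi_1^{[m]}$ and the same $J^{n-1}$ step before and in Theorem \ref{Psimiso}). A few details to tidy: in coordinates the logarithm must be twisted by $\phi^{\circ(m+1)}$ because $z_{m+1}$ is a point of $G^{\phi^{m+1}}$; $z_{m+1}$ is exactly $\pi^{m+1}\mathbf{x}_{m+1}$, so there are no $q$-power contributions and the direct estimate is easy; and the form $\mathbf{x}_{m+1}+(\text{higher order})$ does not by itself give an automorphism of a $\pi$-formal plane, since you also need the total-degree-$k$ coefficients $\pi^{(m+1)(k-1)}\phi^{m+1}(a_k)$ to lie in $\pi R$ (and tend to $0$), which follows from $e\,v_p(k)\leq (p-2)v_p(k)<k-1$ for $k\geq 2$.
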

\begin{proof}
	We refer \cite{PaSa} Theorem 7.10 for the proof.
\end{proof}

\color{black}

\color{red}
\color{black}

\cblue{

	\begin{theorem}\label{Phi_m_i_m}
		The morphism of $\pi$-formal schemes $\Phi_{[m]}: N^{[m]n}X\rightarrow N^{[m-1]n}X$ commutes with the Frobenius morphism $\phi:J^{m+n}X\rightarrow J^{m-1+n}X$, that is, the following diagram commutes
		$$
		\xymatrix{
			N^{[m]n}X\ar[r]^-{\iota_m}\ar[d]_-{\Phi_{[m]}} & J^{m+n}X\ar[d]^{\phi}\\
			N^{[m-1]n}X\ar[r]^{\iota_{m-1}} & J^{m-1+n}X.
		}
		$$
	\end{theorem}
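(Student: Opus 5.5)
The plan is to verify commutativity on $B$-valued points, for an arbitrary $\pi$-adically complete $R$-algebra $B$, and then appeal to Yoneda. By (\ref{N_mnG}), a point of $N^{[m]n}X(B)$ is some $f\in X(W_{[m]n}(B))$ with $f\circ i_m=P^m$. The maps in the square then have the following functorial descriptions:
\begin{itemize}
\item $\iota_m$ sends $f$ to $X(I_{[m]n})(f)\in X(W_{m+n}(B))=J^{m+n}X(B)$, i.e.\ it pushes the point along the ring map $I_{[m]n}:W_{[m]n}(B)\to W_{m+n}(B)$.
\item $\phi$ on $J^{m+n}X$ is induced by the Witt vector Frobenius $F:W_{m+n}(B)\to W_{m-1+n}(B^\phi)$, so it is $X(F)$.
\item $\phm$ is $X(E_{[m]})$ by its definition (\ref{Phi_[m]}).
\end{itemize}
Going around the square therefore amounts to applying $X$ to two compositions of ring maps:
\[
X(F)\circ X(I_{[m]n}) = X(F\circ I_{[m]n}), \qquad X(I_{[m-1]n})\circ X(E_{[m]}) = X(I_{[m-1]n}\circ E_{[m]}).
\]

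The key input is the commutative square $F\circ I_{[m]n}=I_{[m-1]n}\circ E_{[m]}$, which was recorded right after the definition (\ref{F_m}) of $E_{[m]}$. I would recheck it directly: $E_{[m]}$ is defined coordinatewise by the same Frobenius polynomials $F_i$ that define $F$, and $I_{[m]n}$, $I_{[m-1]n}$ only apply the structure map $R\to B$ on the first coordinates, which commutes with evaluating integral polynomials. If a cleaner argument is wanted, one can instead use the ghost-map compatibility $F_w\circ {I_{[m]n}}_w={I_{[m-1]n}}_w\circ E_{[m]_w}$ from the proof of the preceding Proposition, together with injectivity of $w$ on flat $B$ (the universal case). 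Functoriality of $X$ then gives equality of the two compositions above on all of $X(W_{[m]n}(B))$, and in particular on $N^{[m]n}X(B)$.

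The main obstacle is checking that $\phm$ really lands in $N^{[m-1]n}X$ and that both routes are taken over the same base twist $\phi:S\to S$. On the target side, $J^{m-1+n}X(B^\phi)$ is identified with the $B$-points of the appropriate base change, and the square must be read as morphisms over $\phi$. For the first point, given $f$ with $f\circ i_m=P^m$, I would show $X(E_{[m]})(f)\circ i_{m-1}=P^{m-1}$ (base-changed by $\phi$). This uses:
\begin{itemize}
\item $E_{[m]}$ restricts on the $W_m(R)$ factor to $F:W_m(R)\to W_{m-1}(R^\phi)$;
\item $F\circ \exp_\delta=\exp_\delta\circ\phi$, which is the compatibility of the universal map with Frobenius, encoded in the diagram with $P^n_\phi$ given earlier.
\end{itemize}
Together these give $F\circ P^m=P^{m-1}_\phi$, which is exactly the middle square of the diagram preceding (\ref{Phi_[m]}).

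With these checks in place, naturality in $B$ is clear, since every map involved is natural in $B$. Yoneda's lemma then gives the commutativity of the diagram of $\pi$-formal schemes.
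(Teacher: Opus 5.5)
Your proposal is correct and follows the paper's proof: both apply $X(-)$ to the ring-level square $F\circ I_{[m]n}=I_{[m-1]n}\circ E_{[m]}$, using $\iota_m=X(I_{[m]n})$, $\phi=X(F)$ and $\phm=X(E_{[m]})$. Your extra check that $\phm$ lands in $N^{[m-1]n}X$ corresponds to the diagram the paper places just before (\ref{Phi_[m]}). One caution: your optional ghost-map route needs $w\circ E_{[m]}=E_{[m]_w}\circ w$, and the paper proves that Proposition using this very square, so rely on your direct coordinatewise check instead.
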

	\begin{proof}
		For any $\pi$-adically complete $R$-algebra $B$,
		by  (\ref{F_m}) we have the following commutative diagram
		$$
		\xymatrix{
			W_{[m]n}(B)\ar[r]^-{I_{[m]n}}\ar[d]_{E_{[m]}} & W_{m+n}(B)\ar[d]^F\\
			W_{[m-1]n}(B^\phi)\ar[r]_-{I_{[m-1]n}} & W_{m-1+n}(B^\phi).
		}
		$$
		Note that $I_{[m]n}:W_{[m]n}(B)\rightarrow W_{m+n}(B)$ induces the inclusion map $\iota_m:N^{[m]n}X(B)\rightarrow J^{m+n}X(B)$ at the level of schemes. 
		Hence applying the functor $X(-)$ on the above diagram gives our desired 
		result.
	\end{proof}

	Note that we have the associated morphism to ${\Phi_{[m]}}_w:\prod_{[m]n}X\rightarrow\prod_{[m-1]n}X$ corresponding to $\Phi_{[m]}$, given by
	\begin{equation}
		{\phm}_w\langle\underbrace{P,\cdots,P}_{(m+1)\text{-many}},z_{m+1},\cdots,z_{m+n}\rangle=\langle\underbrace{\phi_S(P),\cdots,\phi_S(P)}_{m\text{-many}},\phi_S(z_{m+1}),\cdots,\phi_S(z_{m+n})\rangle.
	\end{equation}
	
	\begin{corollary}
		The following diagram is commutative
		\begin{equation}
			\xymatrix{
				N^{[m]n}X\ar[r]^w\ar[d]_{\phm} & \prod_{[m]n}X\ar[d]_{{\phm}_w}\\
				N^{[m-1]n}X\ar[r]^w & \prod_{[m-1]n}X.	
			}
		\end{equation}
	\end{corollary}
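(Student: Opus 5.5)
We will reduce the corollary to the commutativity of the square in Theorem \ref{Phi_m_i_m} together with the known compatibility of the ghost maps with the Frobenius on full jet spaces. The idea is to factor everything through $\prod_\phi^{m+n}X$ via the ghost-side inclusions ${\iota_m}_w$. Since ${\iota_m}_w$ is, by its defining formula, the inclusion of $\prod_{[m]n}X$ as the closed sub-$\pi$-formal scheme of $\prod_\phi^{m+n}X$ whose first $m+1$ components equal $P$, it is a monomorphism. So it suffices to prove
\[
{\iota_{m-1}}_w\circ w\circ \phm={\iota_{m-1}}_w\circ {\phm}_w\circ w
\]
as morphisms $N^{[m]n}X\to\prod_\phi^{m-1+n}X$.

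The first step is the left-hand side. By the commutativity of (\ref{iota_m}) for $m-1$ we have ${\iota_{m-1}}_w\circ w=w\circ\iota_{m-1}$. Then Theorem \ref{Phi_m_i_m} gives $\iota_{m-1}\circ\phm=\phi\circ\iota_m$, and the diagram relating $\phi$ and $\phi_w$ on $J^{m+n}X$ gives $w\circ\phi=\phi_w\circ w$. Finally (\ref{iota_m}) for $m$ gives $w\circ\iota_m={\iota_m}_w\circ w$. Altogether the left-hand side equals $\phi_w\circ{\iota_m}_w\circ w$.

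The second step is the identity on ghost components
\[
\phi_w\circ{\iota_m}_w={\iota_{m-1}}_w\circ{\phm}_w .
\]
This is a direct check from the formulas. Applying $\phi_w$ to $\langle P,\dots,P,z_{m+1},\dots,z_{m+n}\rangle$ drops the $0$-th entry and applies $\phi_S$ to the remaining entries. The result is $\langle\phi_S(P),\dots,\phi_S(P),\phi_S(z_{m+1}),\dots,\phi_S(z_{m+n})\rangle$ with $m$ copies of $\phi_S(P)$, which is exactly ${\phm}_w$ followed by ${\iota_{m-1}}_w$. This is the ghost-level counterpart of the identity $F_w\circ{I_{[m]n}}_w={I_{[m-1]n}}_w\circ E_{[m]_w}$ used in the proof of the Proposition.

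The main obstacle is making ${\phm}_w$ genuinely land in $\prod_{[m-1]n}X$, i.e.\ checking that the first $m$ entries $\phi_S(P)$ are the marked points required in the fiber product defining $\prod_{[m-1]n}X$. This holds because $P$ is defined over $S$, and the $\phi$-twisted copies $S^{\phi^i}\to X$ are compatible with $\phi_S$, exactly as encoded in the earlier diagram relating $P^{n+1}$ and $P^n_\phi$. A second point to check is the monomorphism claim for ${\iota_{m-1}}_w$; it follows since ${\iota_{m-1}}_w$ is a base change of the diagonal-type closed immersion $S\to X$ at the first $m$ factors. If one prefers to avoid monomorphism arguments, the same computation can be done directly on $B$-points, which is possible because both $\prod_{[m]n}X$ and the maps are given functorially.
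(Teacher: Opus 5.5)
Your proposal is correct and is essentially the paper's own proof. The paper builds the same cube: the top and bottom faces commute by (\ref{iota_m}), the front face by Theorem~\ref{Phi_m_i_m}, the right face by $w\circ\phi=\phi_w\circ w$, and the back face by the same direct ghost-component check $\phi_w\circ{\iota_m}_w={\iota_{m-1}}_w\circ{\phm}_w$. Your explicit remark that ${\iota_{m-1}}_w$ is a (split) monomorphism is precisely the step the paper uses without comment when it concludes that the left face commutes.
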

	\begin{proof}
		Let us cosider the following cube of commutative diagrams
		\begin{equation}\label{cube}
			\xymatrix{
				& \prod_{[m]n}X\ar[dd]_>>>>>>{{\phm}_w}\ar[rr]^-{{\iota_m}_w}& & \prod_\phi^{m+n}X\ar[dd]_-{\phi_w}\\
				N^{[m]n}X\ar[ru]^w\ar[rr]^>>>>>>>>>{\iota_m}\ar[dd]^-{\phm}& & J^{m+n}X\ar[ru]^-w\ar[dd]^>>>>>>\phi &\\
				& \prod_{[m-1]n}X\ar[rr]^>>>>>>>>>{{\iota_{m-1}}_w}& & \prod_\phi^{m-1+n}X\\
				N^{[m-1]n}X\ar[ru]^-w\ar[rr]^-{\iota_{m-1}}& & J^{m-1+n}X.\ar[ru]^-w& 
			}
		\end{equation}
		
		Note that, we need to prove the commutativity of the square on the left face of the cube above in (\ref{cube}). We have the squares in the upper and lower face of the cube  to be commutative by equation (\ref{iota_m}), the square in the front is commutative by Corollary \ref{Phi_m_i_m} and the face in the right is commutative by equation (3.7) in \cite{BoSa1}. Hence we will be done, if we can show the commutativity of the square in the back of the cube, that is, 
		$\phi_w\circ{\iota_m}_w={\iota_{m-1}}_w\circ {\phm}_w.$ This is true because
		\begin{align*}
			\phi_w\circ{\iota_m}_w\langle\underbrace{P,\cdots,P}_{(m+1)\text{-many}},&z_{m+1},\cdots,z_{m+n}\rangle=\phi_w\langle\underbrace{P,\cdots,P}_{(m+1)\text{-many}},z_{m+1},\cdots,z_{m+n}\rangle\\
			&=\langle\underbrace{\phi_S(P),\cdots,\phi_S(P)}_{(m)\text{-many}},\phi_S(z_{m+1}),\cdots,\phi_S(z_{m+n})\rangle\\
			&={\iota_{m-1}}_w\circ {\phm}_w\langle\underbrace{P,\cdots,P}_{(m+1)\text{-many}},z_{m+1},\cdots,z_{m+n}\rangle.
		\end{align*}
		
	\end{proof}
	
	Consider $\hA^N = \Spf R[\bx_0]\h$ where $\bx_0$ is an $N$-tuple of coordinate
	functions and the endomorphism $\pi: \hA^N \map \hA^N$ 
	induced by $\bx_0 \mapsto \pi \bx$. 
	
	\begin{lemma}
		\label{pimul}
		There exists a unique morphism  $(\pi): \prod_{i=0}^n(\hA^N) 
		\map \prod_{i=0}^n(\hA^N)$ making the following diagram commutative
		$$
		\xymatrix{
			\prod_{i=0}^n(\hA^N)\ar[d]_-{(\pi)} \ar[r]^w & \prod_{i=0}^n (\hA^N) 
			\ar[d]^-{\pi \times \cdots \times \pi}\\
			\prod_{i=0}^n(\hA^N) \ar[r]^w & \prod_{i=0}^n (\hA^N).
		}
		$$
		where $w$ is the tuple of Witt polynomials.
	\end{lemma}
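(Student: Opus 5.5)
This is the classical statement that multiplication by $\pi$ on Witt vectors is given by integral polynomials. I would prove it by passing to the coordinate ring and reducing to a Dwork-type congruence criterion.

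\textbf{Setting up.} Write the source as $\Spf R[\bx_0,\dots,\bx_n]\h$ with $\bx_i$ an $N$-tuple, so the ghost map corresponds to $\bx_i \mapsto w_i(\bx_0,\dots,\bx_i)$. The morphism $(\pi)$ amounts to finding $N$-tuples of power series $\bold{y}_0,\dots,\bold{y}_n \in R[\bx_0,\dots,\bx_n]\h$ with
\[
w_i(\bold{y}_0,\dots,\bold{y}_i)=\pi\, w_i(\bx_0,\dots,\bx_i)\quad\text{for all } 0\le i\le n .
\]
Here $\bold{y}_i$ is the pullback of the $i$-th coordinate under $(\pi)$. Since $w_i=w_{i-1}^{(q)}+\pi^i\bold{y}_i$ is triangular in the last variable with coefficient $\pi^i$, these equations determine $\bold{y}_i$ recursively over $K=\Frac(R)$:
\[
\bold{y}_i=\pi^{-i}\Bigl(\pi w_i(\bx)-\sum_{j<i}\pi^j\bold{y}_j^{q^{i-j}}\Bigr).
\]
This gives uniqueness at once, because $R[\bx]\h$ is $\pi$-torsion free and so the ghost map is injective on such points. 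The only thing left to prove is existence, i.e.\ that each $\bold{y}_i$ has coefficients in $R$ rather than merely in $K$.

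\textbf{Integrality.} I would use the standard lemma: a ghost vector $\langle a_0,\dots,a_n\rangle$ in a $\pi$-torsion-free ring $A$ equipped with a lift of Frobenius $\phi$ comes from a Witt vector if and only if
\[
a_i\equiv \phi(a_{i-1}) \mod \pi^i \quad\text{for all } i .
\]
Take $A=R[\bx_0,\dots,\bx_n]\h$. Use the Frobenius lift $\phi$ extending $\phi$ on $R$ by $\bx_j\mapsto \bx_j^q$; this is the lift implicit in the paper's convention $B^{\phi}$, with ghost components $w_i$ living in $B^{\phi^i}$.

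\begin{itemize}
\item The ghost vector $\langle w_i(\bx)\rangle$ is the image of a Witt vector, so it satisfies $w_i\equiv\phi(w_{i-1}) \mod \pi^i$.
\item Multiplying by $\pi$ gives $\pi w_i\equiv \phi(\pi w_{i-1}) \mod \pi^{i+1}$, using that $\phi(\pi)=\pi$.
\item This is stronger than the required congruence modulo $\pi^i$, so the lemma yields integral $\bold{y}_i$.
\end{itemize}

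One remark: $\phi(\pi)=\pi$ holds because $\pi$ is the image of an element of $\mO$ and $\phi$ is an $\mO$-algebra map. This is implicit in the setup, since $\delta$ is a $\pi$-derivation over $\mO$.

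\textbf{Proving the lemma.} I would argue by induction on $i$, following Bourbaki/Borger. Assume $\bold{y}_0,\dots,\bold{y}_{i-1}\in A$. Then
\[
\phi\bigl(w_{i-1}(\bold{y})\bigr)=\sum_{j<i}\pi^j\phi(\bold{y}_j)^{q^{i-1-j}} .
\]
Using $\phi(\bold{y}_j)\equiv \bold{y}_j^q \mod \pi$, the elementary fact that $a\equiv b \mod \pi^k$ implies $a^{q}\equiv b^{q} \mod \pi^{k+1}$ gives
\[
\phi(\bold{y}_j)^{q^{i-1-j}}\equiv \bold{y}_j^{q^{i-j}} \mod \pi^{i-j}.
\]
Hence
\[
\pi^i\bold{y}_i = a_i-\sum_{j<i}\pi^j\bold{y}_j^{q^{i-j}} \equiv a_i-\phi(a_{i-1}) \equiv 0 \mod \pi^i ,
\]
so $\bold{y}_i\in A$. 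The elementary fact is the one place where the ramification hypothesis could bite, since $q=p^h$ and $p\in(\pi)$. It holds because $q\equiv 0 \mod \pi$ and $(b+\pi^k c)^q=b^q+q\,b^{q-1}\pi^k c+\cdots$, where every middle binomial coefficient is divisible by $p$, hence by $\pi$, and the last term $\pi^{kq}c^q$ already has enough powers of $\pi$.

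\textbf{Main obstacle.} The only step needing real care is this integrality lemma; everything else is formal. Finally, the recursively defined $\bold{y}_i$ are power series in the $\pi$-adically complete ring $A$, so they define a morphism of $\pi$-formal schemes. The diagram commutes by construction.
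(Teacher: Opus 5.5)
Your proposal is correct and follows the paper's route. Like the paper, you solve the triangular system $w_i(y_0,\dots,y_i)=\pi\,w_i(x_0,\dots,x_i)$ recursively, and uniqueness comes from injectivity of the ghost map on the $\pi$-torsion-free $\pi$-adically completed polynomial ring. The only difference is that the paper just asserts the system \emph{clearly} has a solution, while you prove that the recursively defined $y_i$ have coefficients in $R$ via the Dwork-type congruence criterion (this uses only $\pi\mid p$ and $\phi(\pi)=\pi$). That integrality step is the one nontrivial point, and your argument for it is sound.
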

	
	\begin{proof}
		It is enough to consider the case when $N=1$.
		The composition $$(\pi \times \cdots \times \pi) \circ w: \prod_{i=0}^n(\hA^1)
		\longrightarrow \prod_{i=0}^n (\hA^1)$$ is given by
		\begin{align}
			\nonumber
			(x_0,x_1, \dots, x_n) \mapsto \langle \pi x_0, \pi(x_0^q + \pi x_1),
			\cdots , \pi(x_0^{q^n} + \pi x_1^{q^{n-1}} + \cdots + \pi^n x_n) \rangle.
		\end{align}
		Consider the following system of equation in variables $L_0,\dots , L_n$ 
		given by 
		\beqar
		L_0 &=& \pi x_0 \\
		\vdots & & \vdots \\
		L_0^{q^n} + \pi L_1^{q^{n-1}} + \cdots + \pi^n L_n & = & \pi(x_0^{q^n} + \pi
		x_1^{q^{n-1}} + \cdots + \pi^n x_n).
		\eeqar
		Then clearly the above system has a unique solution and hence we obtain our 
		result by setting $(\pi) (x_0,\dots , x_n) := (L_0,\dots , L_n)$ and we are
		done.
	\end{proof}

	Let $H = \hA^g = \Spf R[\bx_0]\h$ where $\bx_0$ is a $g$-tuple of variables.
	The following composition of morphisms
	\begin{align}
		N^{[m]n}H \stk{\iota_m}{\longrightarrow} J^{m+n}H \stk{w}{\longrightarrow} 
		\prod_{\phi}^{m+n}H
	\end{align}
	is given by 
	\begin{align}
		(w\circ \iota_m)(t) = \langle 0, \dots , 0, \pi^{m+1}t_0 , \dots ,
		\pi^{m+1}t_0^{q^{n-1}} + \cdots + \pi^{m+n}t_{n-1}\rangle,
	\end{align}
	where the first $(m+1)$-entries in the left hand side of the above expression
	are zeroes for all $t=(t_0,\dots , t_{n-1}) \in N^{[m]n}H$.
	
	Let $\pr_j: \prod_{\phi}^N H \longrightarrow \prod_{\phi}^{N-j-1} H$ be 
	the projection map given by $\pr_j(y_0, \dots , y_N)= \pi^{-(j+1)}(y_{j+1},
	\dots, y_N)$ whenever the above made sense.
	Consider the following composition
	\begin{align}
		\tilde{w}:
		N^{[m]n}H \stk{\iota_m}{\longrightarrow} J^{m+n}H \stk{w}{\longrightarrow} 
		\prod_{\phi}^{m+n}H \stk{\pr_m}{\longrightarrow} \prod_{\phi}^{n-1} H
	\end{align}
	which is given by 
	$$
	(t_0,\dots, t_{n-1}) \mapsto \langle t_0, t_0^q+\pi t_1, \dots, t_0^{q^{n-1}} + 
	\pi t_1^{q^{n-2}}+ \cdots + \pi^{n-1}t_{n-1}\rangle.
	$$
	
	Consider the further composition of maps
	\begin{align}
		N^{[m]n}H \stk{\iota_m}{\longrightarrow} J^{m+n}H \stk{w}{\longrightarrow} 
		\prod_{\phi}^{m+n}H \stk{\phi_w}{\longrightarrow} \prod_{\phi}^{m-1+n} H,
	\end{align}
	which is given by 
	\begin{align}
		(\phi_w \circ w\circ \iota_m)(t) = \langle 0, \dots , 0, \pi^{m+1}t_0 , \dots ,
		\pi^{m+1}t_0^{q^{n-1}} + \cdots + \pi^{m+n}t_{n-1}\rangle,
	\end{align}
	where the first $m$-entries in the left hand side of the above expression
	are zeroes for all $t=(t_0,\dots , t_{n-1}) \in N^{[m]n}H$. Hence by 
	Theorem \ref{JnNm} and Proposition \ref{pimul} we have the following diagram
	by appropriately restricting the product space of $H$ as follows
	\begin{align}
		\label{dia1}
		\xymatrix{
			J^{n-1}(N^{[m]1}H) \ar[d]^-{\phm} \ar[r]^-\sim  & N^{[m]n}H \ar[d]^{\phm} 
			\ar[r]^w & \prod_{[m]n} H \ar[d]^{\phi_w} 
			\ar[r]^{\pr_m} & \prod_{\phi}^{n-1} H \ar[d]^{(\pi \times \cdots \times  \pi)}\\
			J^{n-1}(N^{[m-1]1}H) \ar[r]^-\sim
			& N^{[m-1]n}H \ar[r]^w & \prod_{[m-1]n} H \ar[r]^{\pr_{m-1}} & 
			\prod_{\phi}^{n-1} H. \\
	}\end{align}
	For $n =1$, the above diagram (\ref{dia1}) becomes 
	\begin{align}
		\nonumber
		\xymatrix{
			N^{[m]1}H \ar[r]^-{\tilde{w}}\ar[d]_\phm & H \ar[d]^\pi\\
			N^{[m-1]1}H \ar[r]^-{\tilde{w}} & H,
	}\end{align}
	and hence we obtain 
	\begin{align}
		\label{t0}
		\phm(t_0) = \pi t_0 \mb{ for all } t_0 \in H.
	\end{align}

	\begin{theorem}\label{phm&fm_comm}
		For any $\pi$-formal scheme $X$ over $S$, the lateral Frobenius $\mff_m$ and $\phm$ commutes with each other. In particular, the following diagram commutes for every $m,n\geq 1$
		\begin{equation}\label{phi_NmnX}
			\xymatrix{
				N^{[m]n}X\ar[rr]^-{\mff_m}\ar[d]_-{\phm} & & N^{[m]n-1}X\ar[d]^-{\phm}\\
				N^{[m-1]n}X\ar[rr]^-{\mff_{m-1}} & & N^{[m-1]n-1}X.
			}
		\end{equation} 
	\end{theorem}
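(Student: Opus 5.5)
Both $\mff_m$ and $\phm$ are defined functorially by precomposition with ring maps, so I will reduce the statement to an identity of ring homomorphisms between shifted Witt vectors:
\begin{equation*}
F_{[m-1]}\circ E_{[m]} = E_{[m]}\circ F_{[m]} : W_{[m]n}(B)\longrightarrow W_{[m-1]n-1}(B^{\phi^2}).
\end{equation*}
Here the right-hand $E_{[m]}$ is the one on $W_{[m]n-1}(B^\phi)$. Given this identity, for any $\pi$-adically complete $R$-algebra $B$ and any $r\in N^{[m]n}X(B)$, viewed by (\ref{N_mnG}) as a point of $X(W_{[m]n}(B))$ restricting to $P^m$, both composites $\phm\circ\mff_m$ and $\mff_{m-1}\circ\phm$ send $r$ to $X$ applied to the same ring map. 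Hence they agree on $B$-points. By Yoneda, diagram (\ref{phi_NmnX}) then commutes as a diagram of $\pi$-formal schemes over the appropriate twist $\phi^2:S\to S$.

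To prove the ring identity, I will use the ghost map. First treat the case where $B$ is $\pi$-torsion free, for instance a $\pi$-adically completed polynomial ring over $R$. In that case $w:W_{[m]n}(B)\to\prod_{[m]n}B$ is injective, because the Witt polynomials can be solved successively after inverting $\pi$. By Theorem \ref{lat_Frob1} and the Proposition giving diagram (\ref{Frob_m_witt}), the ring maps $F_{[m]}$, $F_{[m-1]}$ and $E_{[m]}$ intertwine $w$ with $F_{[m]_w}$, $F_{[m-1]_w}$ and $E_{[m]_w}$ respectively. It then suffices to compare the ghost-side composites on $\langle z_0,\dots,z_m,z_{m+1},\dots,z_{m+n}\rangle$:
\begin{align*}
F_{[m-1]_w}\circ E_{[m]_w}\langle z\rangle &= \langle \phi(z_1),\dots,\phi(z_m),z_{m+2},\dots,z_{m+n}\rangle,\\
E_{[m]_w}\circ F_{[m]_w}\langle z\rangle &= \langle \phi(z_1),\dots,\phi(z_m),z_{m+2},\dots,z_{m+n}\rangle.
\end{align*}
The two agree, which gives the identity for torsion-free $B$. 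For general $B$, I will use that all the maps are given by universal integral polynomials in the Witt coordinates: $E_{[m]}$ by the $F_i$ of (\ref{F_m}), and $F_{[m]}$ by the $\tilde F_h$ of Theorem \ref{lat_Frob1}. The identity therefore holds on the universal torsion-free algebra $R[x_{m+1},\dots,x_{m+n}]\h$ with the $R$-components fixed. By functoriality in $B$ (surjecting a free algebra onto $B$), it transfers to all $B$.

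The main obstacle is bookkeeping rather than mathematics. I must check that the indices and the Frobenius twists $B^\phi$ versus $B^{\phi^2}$ match on both sides, and that the $R$-part components transform compatibly. In particular, $F_{[m]_w}$ applies $\phi$ to the first $m+1$ entries and then drops $z_{m+1}$, while $E_{[m]_w}$ drops $z_0$. I will verify that the surviving entries land in the $R^{\phi^i}$ and $B^{\phi^i}$ slots of $\prod_{[m-1]n-1}B^{\phi^2}$ consistently. I also need to confirm that $\phm$ preserves the marked-point condition $f\circ i_m=P^m$. This is the commuting square with $P^m\mapsto P^{m-1}$ via $\phi$ displayed before (\ref{Phi_[m]}), and it follows from $\exp_\delta$ commuting with $F$. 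That check is what makes the bottom-row maps land in $N^{[m-1]\ast}X$.
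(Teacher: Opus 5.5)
Your proposal is correct and follows essentially the same route as the paper: reduce, via the functorial description of $N^{[m]n}X$, to the ring identity $E_{[m]}\circ F_{[m]}=F_{[m-1]}\circ E_{[m]}$, then verify it on ghost components for $\pi$-torsion-free $B$ with the identical computation giving $\langle\phi(z_1),\dots,\phi(z_m),z_{m+2},\dots,z_{m+n}\rangle$ on both sides. Your naturality argument extending the identity to arbitrary $B$, and your check that $\Phi_{[m]}$ respects the marked point, make explicit what the paper covers by a one-line appeal to the ghost principle and by the definition of $\Phi_{[m]}$.
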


	\begin{proof}
		
		Let $B$ be an $R$-module, then by (\ref{N_mnG}) it is enough to show the commutativity of the following diagram 	
		\begin{equation}\label{phi_WmnX}
			\xymatrix{
				W_{[m]n}(B)\ar[rr]^-{F_{[m]}}\ar[d]_-{E_{[m]}} & & W_{[m]n-1}(B^\phi)\ar[d]^-{E_{[m]}}\\
				W_{[m-1]n}(B^\phi)\ar[rr]^-{F_{[m-1]}} & & W_{[m-1]n-1}(B^{\phi^2}).
			}
		\end{equation} 
		
		Let us consider the following diagram with the ghost maps (\ref{phi_NmnX})
		\begin{equation}\label{phi_NmnX_w}
			\xymatrix{
				\prod_{[m]n}B\ar[rrrr]^-{{{F}}_{[m]_w}}\ar[ddd]_-{{E_{[m]}}_w} & & & & \prod_{[m]n-1}B^\phi\ar[ddd]^-{{E_{[m]}}_w} \\
				& W_{[m]n}(B)\ar[rr]^-{{{F}}_{[m]}}\ar[ul]_-w\ar[d]_-{E_{[m]}} & & W_{[m]n-1}(B^\phi)\ar[d]^-{E_{[m]}}\ar[ru]^-w &\\
				& W_{[m-1]n}(B^\phi)\ar[rr]^-{{{F}}_{[m-1]}}\ar[dl]^-w & & W_{[m-1]n-1}(B^{\phi^2})\ar[rd]_-w &\\
				\prod_{[m-1]n}B^\phi\ar[rrrr]_-{{{F}}_{{[m-1]}_w}} & & & & \prod_{[m-1]n-1}B^{\phi^2}.
			}
		\end{equation}
		Based on the ghost principle, it is also sufficient to assume that $B$ has no $\pi$-torsion which makes the ghost maps $w$ injective and therefore it is sufficient to show the commutativity of the outer square in the above diagram
		(\ref{phi_NmnX_w}). 
		Let $$z=\langle r_0,\cdots,r_m,z_{m+1},\cdots,z_{m+n}\rangle\in \prod_{[m]n}B,$$ then 
		\begin{align*}
			{E_{[m]}}_w\circ {{F}}_{[m]_w}(z)&= {E_{[m]}}_w\circ {{F}}_{[m]_w}\langle r_0,\cdots,r_m,z_{m+1},\cdots,z_{m+n}\rangle\\
			&={E_{[m]}}_w\langle\phi(r_0),\cdots,\phi(r_m),z_{m+2},\cdots,z_{m+n}\rangle\\
			&=\langle\phi(r_1),\cdots,\phi(r_m),z_{m+2},\cdots,z_{m+n}\rangle
		\end{align*} 
		and similarly
		\begin{align*}
			{F}_{[m-1]_w}\circ {E_{[m]}}_w(z) &={F}_{[m-1]_w}\circ {E_{[m]}}_w\langle r_0,\cdots,r_m,z_{m+1},\cdots,z_{m+n}\rangle\\
			&={F}_{[m-1]_w}\langle r_1,\cdots,r_m,z_{m+1},\cdots,z_{m+n}\rangle\\
			&=\langle\phi(r_1),\cdots,\phi(r_m),z_{m+2},\cdots,z_{m+n}\rangle\\&={E_{[m]}}_w\circ {{F}}_{[m]_w}(z).
		\end{align*}
		Hence we have ${E_{[m]}}_w\circ {{F}}_{[m]_w}={F}_{[m-1]_w}\circ {E_{[m]}}_w$, therefore, the diagram (\ref{phi_WmnX}) commutes and hence we are done.
	\end{proof}

	Let $G$ be a $\pi$-formal group scheme of relative dimension $g$ over $S$. 
	Then by Proposition $2.2$ and Lemma $2.3$ in \cite{Bu2}, we have a 
	morphism of $\pi$-formal group schemes $\Psi^{[m]}_1: N^{[m]1}G \simeq \hG^g$ 
	given by $\Psi_1^{[m]}(t_0) = 1/\pi^{m+1} \log_{G,\phi^{\circ (m+1)}}
(\pi^{m+1} t_0)$ where $\log_G$ is the logarithm map from the formal group
law of $G$ to the additive group and $\log_{G,\phi^{\circ (m+1)}}$ is the
map obtained by applying $\phi^{\circ (m+1)}$ to the coefficients of $\log_G$.
	Also when $v_\pi(p) \leq p-2$, the above map is an isomorphism of $\pi$-formal
	group schemes.
	
	\begin{theorem}
		\label{Psimiso}
		Let $G$ be a $\pi$-formal smooth group scheme of relative dimension $g$ over 
		$S$. For all $m$ and $n$  we have the following 
		isomorphisms of $\pi$-formal group schemes
		$$
		\xymatrix{
			N^{[m]n}G \ar[d]^\phm \ar[r]^-\sim  &J^{n-1}(N^{[m]1}G)\ar[rr]^-{J^{n-1}(\Psi_1^{[m]})} \ar[d]^{\phm} & & J^{n-1}(\hG^g) \ar[d]^{J^n(\pi)} \ar[r]^\sim & \WW_{n-1}^g \ar[d]^{\pi}
			\\
			N^{[m-1]n}G \ar[r]^-\sim& J^{n-1}(N^{[m-1]1}G)  \ar[rr]_-{J^{n-1}(\Psi_{1}^{[m-1]})} & & J^{n-1}(\hG^g) \ar[r]^\sim & \WW_{n-1}^g. \\
		}
		$$
		Also if $v_\pi(p) \leq p-2$, then the horizontal maps $J^{n-1}(\Psi_{m-1})$
		and $J^{n-1}(\Psi_m)$ are isomorphisms and therefore we have
		$$
		\xymatrix{
			N^{[m]n}G \ar[d]_\phm \ar[r]^-\sim & \WW_{n-1}^g \ar[d]^\pi \\
			N^{[m-1]n}G \ar[r]^-\sim & \WW_{n-1}^g \\
		}$$
	\end{theorem}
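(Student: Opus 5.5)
The plan is to reduce the whole diagram to the case $n=1$ via functoriality of $J^{n-1}$, and to settle $n=1$ by a computation in local Witt coordinates.

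First I will show that under the isomorphism $N^{[m]n}G\simeq J^{n-1}(N^{[m]1}G)$ of Theorem \ref{JnNm}, the morphism $\phm$ on the left corresponds to $J^{n-1}(\phm|_{N^{[m]1}G})$. Both sides are morphisms of prolongation sequences; the sequence $N^{[m]*}G$ is identified with the canonical prolongation sequence $J^*(N^{[m]1}G)$. By the universal property of canonical prolongation sequences, a morphism out of $J^*(N^{[m]1}G)$ is determined by its level-zero component. Since $\Em$ is defined by the same Witt polynomials $F_i$ at every level, it is compatible with the truncations $T$. It also commutes with the lateral Frobenii by Theorem \ref{phm&fm_comm}. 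Hence $\phm$ is a morphism of prolongation sequences, and it is determined by its restriction to $N^{[m]1}G$. This gives the first square.

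Second, for $n=1$, I will show that $\Psi_1^{[m-1]}\circ\phm=\pi\cdot\Psi_1^{[m]}$ as maps $N^{[m]1}G\to\hG^g$. By Lemma \ref{Affine_Nmn} and (\ref{Nmn_Witt}), I may work on $H$ with coordinate $t_0=\bx_{m+1}$. There (\ref{t0}) gives $\phm(t_0)=\pi t_0$, up to the Frobenius twist on coefficients. I must be careful here: $\phm$ lies over $\phi:S\to S$, so $\phi$ is applied to coefficients. Then
\[
\Psi_1^{[m-1]}(\phm t_0)=\pi^{-m}\log_{G,\phi^{\circ m}}^{\phi}(\pi^{m+1}t_0)=\pi\cdot \pi^{-(m+1)}\log_{G,\phi^{\circ(m+1)}}(\pi^{m+1}t_0)=\pi\,\Psi_1^{[m]}(t_0).
\]
Indeed, twisting the coefficients of $\log_{G,\phi^{\circ m}}$ by $\phi$ gives $\log_{G,\phi^{\circ(m+1)}}$, and $\phi(\pi)=\pi$. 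This exponent bookkeeping, and the exact point at which the coefficient twist enters, is the step I expect to be most delicate. I will verify it on the ghost side using the commutative cube of the Corollary to Theorem \ref{Phi_m_i_m}, where $\phm$ becomes a coordinate shift followed by $\phi_S$. Applying $J^{n-1}$ then gives the middle square with $J^{n-1}(\pi)$.

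Third, for the right square I need that $J^{n-1}(\pi)$ on $J^{n-1}(\hG^g)\simeq\WW^g_{n-1}$ is multiplication by $\pi$ in Witt vectors. On ghost components, $J^{n-1}(\pi)$ acts as $\pi\times\cdots\times\pi$, because $w$ is functorial and $\pi$ is $\phi$-invariant. Lemma \ref{pimul} then characterizes the unique lift, which is the Witt multiplication by $\pi$, since $w$ is a ring map. This is justified by injectivity of $w$ on flat points, which suffices because all the schemes involved are $\pi$-formal affine spaces.

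Finally, when $v_\pi(p)\le p-2$, the maps $\Psi_1^{[m]}$ are isomorphisms by the cited result of \cite{Bu2}. So $J^{n-1}(\Psi_1^{[m]})$ are isomorphisms, and composing the squares yields the last diagram, consistent with Theorem \ref{N_mn}.
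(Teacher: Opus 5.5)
Your proposal is correct and follows the paper's route: pass to $H$ via (\ref{Nmn_Witt}), use (\ref{t0}) to get $\Psi_1^{[m-1]}\circ\phm=\pi\,\Psi_1^{[m]}$ for $n=1$, then apply $J^{n-1}$. Several of your steps fill in what the paper leaves implicit: the handling of the $\phi$-twist (turning $\log_{G,\phi^{\circ m}}$ into $\log_{G,\phi^{\circ(m+1)}}$), the use of $\phm\circ\mff_m=\mff_{m-1}\circ\phm$ to identify the left column with $J^{n-1}(\phm)$, and the appeal to Lemma \ref{pimul} for the right square. One small correction: the universal property says morphisms \emph{into} a canonical prolongation sequence are determined by their level-zero component, not morphisms out of one. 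It still applies here because the target $J^*(N^{[m-1]1}G)$, twisted by $\phi$, is canonical.
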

	
	\begin{proof}
		Let $U \subset G$ be an affine open subscheme that contains the identity section
		over $S$ with an \'{e}tale map $f: U \map H = \Spf \hA^g$. By (\ref{Nmn_Witt})
		we have $N^{[m]n}G \simeq N^{[m]n}H$.
		
		Hence combining the above with equation (\ref{t0}) we obtain 
		$$
		\Psi_1^{[m-1]} \circ \phm = \pi \phm
		$$
		that is
		$$\xymatrix{
			N^{[m]1}G \ar[d]_\phm \ar[r]^-{\Psi_m} & \hG^g \ar[d]^\pi\\
			N^{[m-1]1}G \ar[r]_-{\Psi_{m-1}} & \hG^g.
		}$$
		We obtain our result by applying the functor $J^{n-1}$ to the above 
		diagram.
	\end{proof}

	For $m=0$, Borger and Saha in \cite{BoSa1} (Theorem 4.4) showed that the morphism $(\phi\circ\iota_0-\iota_0\circ \mff_0):N^{[0]n}G\rightarrow J^{n-1}G$ factors uniquely through the projection $u_0:N^{[0]m}G\rightarrow N^{[0]1}G$. 
	Our next result is the generalization of the above for a general $m\geq 0$.
	
	\begin{theorem}\label{N_[m]nG}
		Let $G$ be a smooth commutative $\pi$-formal group scheme over $S$ of relative dimension $g$. Then the morphism of $\pi$-formal group schemes $(\phi^{\circ (m+1)}\circ \iota_m-\phi^{\circ m}\circ \iota_m\circ \mf{f}_m):N^{[m]n}G\longrightarrow J^{n-1}G$ factors uniquely through $u_{m+1}:N^{[m]n}G\longrightarrow N^{[m]1}G$ as follows
		$$
		\xymatrix{
			N^{[m]n}G\ar[rrrr]^{(\phi^{\circ (m+1)}\circ \iota_m-\phi^{\circ m}\circ \iota_m\circ \mf{f}_m)}\ar[dd]_{u_{m+1}} & &&& J^{n-1}G\\
			& &  &\\
			N^{[m]1}G.\ar@{.>}[rrrruu]_{h_m} & & &&
		}
		$$
	\end{theorem}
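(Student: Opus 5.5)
Following Borger--Saha's proof of Theorem 4.4 in \cite{BoSa1} (the case $m=0$), I reduce to local Witt coordinates and compute on the ghost side. Set $D:=\phi^{\circ (m+1)}\circ \iota_m-\phi^{\circ m}\circ \iota_m\circ \mf{f}_m$. It is a homomorphism of $\pi$-formal group schemes: $\phi$, $\iota_m$ and $\mf{f}_m$ are group homomorphisms, being induced by the ring maps $F$, $I_{[m]n}$ and $F_{[m]}$, and $G$ is commutative. Since $u_{m+1}:N^{[m]n}G\to N^{[m]1}G$ is a faithfully flat epimorphism (in coordinates it is the projection $R[\bx_{m+1},\dots,\bx_{m+n}]\h \to R[\bx_{m+1}]\h$), uniqueness of $h_m$ is automatic. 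For existence it suffices to show that $D$ kills the kernel of $u_{m+1}$. Equivalently, on coordinate rings, I must show that $D^*$ of the coordinate functions of $J^{n-1}G$ only involves $\bx_{m+1}$, so that $D^*$ lands in the subring $R[\bx_{m+1}]\h$.

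\emph{Reduction to $H$.} By Lemma \ref{Affine_Nmn} and (\ref{Nmn_Witt}), I replace $G$ by $H=\Spf R[\bx_0]\h$ near the identity, writing $t=(t_0,\dots,t_{n-1})$ for coordinates on $N^{[m]n}H$ with $t_i=\bx_{m+1+i}$. The ghost map is injective on flat points, so I can compute the two terms after composing with $w: J^{n-1}H\to \prod_\phi^{n-1}H$. The ghost coordinates of $\iota_m(t)$ were computed above: $w_{m+1+j}=\pi^{m+1}\tilde w_j(t)$ for $0\le j\le n-1$, where $\tilde w_j(t)=t_0^{q^j}+\pi t_1^{q^{j-1}}+\cdots+\pi^j t_j$ and the first $m+1$ entries vanish. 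Applying $\phi_w^{m+1}$ shifts left by $m+1$, so the $j$-th ghost component of $\phi^{\circ(m+1)}\iota_m(t)$ is $\pi^{m+1}\tilde w_j(t)$ (with $\phi$ applied to coefficients). The second term $\phi^{\circ m}\iota_m\mf{f}_m(t)$ has ghost components equal to $\phi_S$ applied to the corresponding entries of $\iota_m(\mf{f}_m t)$, and by Theorem \ref{lat_Frob1} these are exactly $\pi^{m+1}\tilde w_{j+1}(t)$ for $j\ge 1$. This is the content of the identity (\ref{lat_Frob}).

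\emph{Key step and main obstacle.} The main obstacle is that the subtraction is the group law of $G$, not ghost subtraction; ghost components are additive only after passing through the logarithm. I would therefore pass to $\log_G$, using $\Psi^{[m]}_1$ and the coordinates in which the group law is Witt addition, and work over $B\otimes K$. In logarithmic ghost coordinates the difference has $j$-th component $\pi^{m+1}\big(\tilde w_j(t)-\tilde w_j(\mathrm{shifted})\big)$, with matching for $j\ge 1$ coming from (\ref{lat_Frob}) with $2\le j\le n$. The upshot is that the difference is supported only in the entries determined by $t_0$, namely the $0$-th entry $\pi^{m+1}\phi(t_0)$-type terms minus $t_0$-data coming from $\mf{f}_m$. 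Exactly as in \cite{BoSa1}, I then conclude that all ghost components of $D(t)$ are polynomials (power series after the logarithm) in $t_0$ alone. Since $\mf{f}_m$ commutes with $u$, the Witt coordinates of $D(t)$ are then integral functions of $t_0$ alone, because the ghost map over a torsion-free base determines them. Integrality holds because $D$ is already defined over $R$.

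Finally, the argument descends from $H$ to $G$. Functoriality via the étale map $U\to H$ and the isomorphism $N^{[m]n}G\simeq N^{[m]n}H$ transports the factorization, giving $h_m$ defined by $D^*$ on $R[\bx_{m+1}]\h$.
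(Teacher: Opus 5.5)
Your overall architecture is the paper's. Uniqueness holds because $u_{m+1}$ has the section $\sigma_m$, so $h_m$ must equal $D\circ\sigma_m$. Existence comes from computing ghost components and using that $w$ is injective on points with values in flat $B$. The gap is in the step you call the key step, and it comes from an obstacle that is not there.

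On $B$-points, the ghost map $w\colon J^{n-1}G\to\prod_\phi^{n-1}G$ is $G$ applied to the ring homomorphism $w\colon W_{n-1}(B)\to\prod_\phi^{n-1}B$. It is therefore a homomorphism of group functors when $\prod_\phi^{n-1}G=G\times G^{\phi}\times\cdots\times G^{\phi^{n-1}}$ carries the componentwise group law of $G$. So $w(D(t))$ is the componentwise $G$-difference of $\langle\phi_S^{m+1}(z_{m+1}),\phi_S^{m+1}(z_{m+2}),\dots,\phi_S^{m+1}(z_{m+n})\rangle$ and $\langle e,\phi_S^{m+1}(z_{m+2}),\dots,\phi_S^{m+1}(z_{m+n})\rangle$. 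The second vector has this form because $F_{[m]_w}$ discards $z_{m+1}$. The difference is $\langle\phi_S^{m+1}(z_{m+1}),e,\dots,e\rangle$, which depends on $z_{m+1}$, hence on $t_0$, alone. Equal components cancel under any group law, so no logarithm is needed. Indeed, the identity (\ref{lat_Frob}) you cite already gives equality of the components of index $j\ge 1$ as points of $G$. Your indices are also off: for $j\ge 1$ the $j$-th ghost component of $\phi^{\circ m}\iota_m\mf{f}_m(t)$ is $\pi^{m+1}\tilde w_j(t)$, not $\pi^{m+1}\tilde w_{j+1}(t)$, and its $0$-th component is $e$.

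The logarithmic workaround you propose does not work in the generality of the statement, for three reasons.
\begin{enumerate}
\item The coordinates in which the group law is Witt addition, and the fact that $\Psi_1^{[m]}$ is an isomorphism, come from Theorem \ref{N_mn} and the discussion before Theorem \ref{Psimiso}. Both need $v_\pi(p)\le p-2$, which is not assumed here.
\item $\Psi_1^{[m]}$ linearizes $N^{[m]1}G$, whereas the subtraction defining $D$ takes place in $J^{n-1}G$.
\item Suppose you apply $\log_G$ to the ghost components. To pass from \emph{the logarithms of the ghost components of $D(t)$ depend only on $t_0$} back to \emph{the ghost components depend only on $t_0$}, you need $\log_G$ to be injective on points of $\widehat{G}$ close to $e$. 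Without a ramification bound this fails in general, since such points can be torsion.
\end{enumerate}

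To repair the proof, replace that paragraph by the componentwise cancellation above. Then compare $w(D(t))$ with $w(D(\sigma_m u_{m+1}(t)))$ at the universal point of $N^{[m]n}G$, whose coordinate ring is a $\pi$-adically completed polynomial ring and hence flat. Injectivity of $w$ then gives $D=(D\circ\sigma_m)\circ u_{m+1}$, which is exactly the paper's proof.
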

	\begin{proof}
		Let $e:S\rightarrow G$ denote the identity section and we consider the following diagram
		$$
		\xymatrix{
			N^{[m]n}G\ar[rr]^{ w}\ar[d]^{x\mapsto(x,-x)}\ar@/_6pc/[ddddd]_{\phi^{\circ (m+1)}\circ \iota_m-\phi^{\circ m}\circ \iota_m\circ \mf{f}_m} & &  \prod_{[m]n}G\ar[d]_{z\mapsto(z,-z)}\ar@/^6pc/[ddddd]\\
			N^{[m]n}G\times N^{[m]n}G\ar[rr]^w \ar[d]^{(\iota_m,\mf{f}_m)} & & \prod_{[m]n}G\times \prod_{[m]n}G\ar[d]_{(i_m,\mf{f}_{m_w})}\\
			J^{m+n}G\times N^{[m]n-1}G\ar[rr]^w \ar[d]^{(\phi^{\circ (m+1)},\iota_{m})} & & \prod_\phi^{m+n}G\times \prod_{[m]n-1}G\ar[d]_{(\phi_w^{\circ (m+1)},i_m)}\\
			J^{n-1}G\times J^{n+m-1}G\ar[rr]^w \ar[d]^ {(id,\phi^{\circ m})} & & \prod_\phi^{n-1}G\times \prod_\phi^{m+n-1}G\ar[d]_{(id,\phi_w^{\circ m})}\\
			J^{n-1}G\times J^{n-1}G\ar[rr]^{w}\ar[d]^{(x,y)\mapsto x+y} & & \prod_\phi^{n-1}G\times\prod_\phi^{n-1}G\ar[d]_{(x,y)\mapsto x+y}\\
			J^{n-1}G\ar[rr]^w & &\prod_\phi^{n-1}G.
		}
		$$
		Let $z\in\prod_{[m]n}G$, then $z$ can be expressed as $z=\langle e,\cdots,e,z_{m+1},\cdots, z_{m+n}\rangle$, where $z_i\in G^{\phi^i}$. The right hand side of the column will map $z\mapsto \langle \phi_S^{m+1}(z_{m+1}),e,\cdots,e\rangle.$
		Hence the map $\prod_{[m]n}G\rightarrow \prod_\phi^{n-1}G$ factors as $\prod_{[m]n}G\rightarrow\prod_{[m]1}G\rightarrow \prod_\phi^{n-1}G$ as 
		\begin{equation}
			\langle e,\cdots,e,z_{m+1},\cdots, z_{m+n}\rangle\mapsto\langle e,\cdots, e,z_{m+1}\rangle\mapsto\langle\phi_S^{m+1}(z_{m+1}),e,\cdots,e\rangle.
		\end{equation}
		Hence we have the following diagram commutative only for the solid arrows:
		$$
		\xymatrix{
			& N^{[m]1}G\ar@{.>}[ldd]\ar[r]^w &\prod_{[m]1}G\ar[ldd]\\
			N^{[m]n}G\ar[r]^w\ar[d]_{\phi^{\circ (m+1)}\circ \iota_m-\phi^{\circ m}\circ \iota_m\circ \mf{f}_m}\ar[ur]^-{u_{m+1}} & \prod_{[m]n}G\ar[d]\ar[ru] & \\
			J^{n-1}G \ar[r]_w & \prod_\phi^{n-1}G. & 
		}
		$$
		We claim that there is a unique map $h_m:N^{[m]1}G\rightarrow J^{n-1}G$ making the whole diagram above commutative.


		Let $\bx_0=(x_{01},\cdots,x_{0g})$ be a system of \'etale coordinates at identity section $e$ of $G$.  From \ref{Nmn_Witt} we have $N^{[m]n}G\simeq \Spf R[\bx_{m+1},\cdots,\bx_{m+n}]^{\hat{}}$, where $(\bx_0,\cdots,\bx_{m+n})$ is the system of local Witt coordinates of length $m+n+1$ of $G$.
		Therefore we have a section to the projection $u_{m+1}:N^{[m]n}G\rightarrow N^{[m]1}G$, $\sigma_m:N^{[m]1}G\rightarrow N^{[m]n}G$ which  maps $(\bx_{m+1})\mapsto (\bx_{m+1},e,\cdots,e)$ in terms of the local Witt coordinates. \\
		We put $h_m=(\phi^{\circ (m+1)}\circ \iota_m-\phi^{\circ m}\circ \iota_m\circ \mf{f}_m)\circ \sigma_m.$ Then to show commutativity, it is enough to prove $$\phi^{\circ (m+1)}\circ \iota_m-\phi^{\circ m}\circ \iota_m\circ \mf{f}_m=h_m\circ u_{m+1}.$$ 
		Since $G$ is smooth, that implies $J^{m+n}G$ and $N^{[m]n}G$ are all smooth and in particular flat over $S$. Hence it is sufficient to consider a test ring $B$ which is a flat $\pi$-adically complete $R$-algebra and show that the ghost map $w:J^{n-1}G(B)\rightarrow \prod_{\phi}^{n-1}G(B)$ is injective. By adjointness it is same as showing $G(W_{n-1}(B))\rightarrow G(\prod_\phi^{n-1}B)$ to be injective and this is same as showing $w:W_{n-1}(B)\rightarrow\prod_\phi^{n-1}B$ is injective. But this holds as  $B$ is flat.
	\end{proof}

}

\color{black}


\color{black}

\end{document}

%% file: newcommand.tex
\documentclass{amsart}
\usepackage{amssymb,amsmath,amsthm,epsf,epsfig,dsfont,bbm}
\usepackage{mathabx}
\usepackage{graphicx}

\usepackage{latexsym}
\usepackage{upref, eucal}
\usepackage[all]{xy}

\newcommand {\nc} {\newcommand}
\newcommand {\enm} {\ensuremath}

\nc {\bdm} {\begin{displaymath}}
\nc {\edm} {\end{displaymath}}

\newtheorem {theorem} {\bf{Theorem}}[section]
\newtheorem {lemma}[theorem] {\bf Lemma}
\newtheorem {proposition}[theorem] {\bf Proposition}

\newtheorem {corollary}[theorem] {\bf Corollary}
\numberwithin {equation}{section}

\newcommand\WW{\mathbb{W}}

     \newcommand\fp{\mathfrak{p}}

\nc{\J}{\enm{\mathcal{J} }}
\nc {\Z} {\enm{\mathbb{Z}}}
\nc {\form}[1] {\enm{\mbox{\underline{for}}}_{#1}}
\nc {\prol}[1] {\enm{\mbox{\underline{prol}}_{{#1}^*}}}

\nc {\stk} {\stackrel}

\newcommand{\map}{\rightarrow}

\newcommand{\beqar}{\begin{eqnarray*}}
\newcommand{\eeqar}{\end{eqnarray*}}

\newcommand{\Pn}[2] {\ensuremath{ {\mathbb{P}}^{#1}_{#2}}}
\nc{\Quot}[3]{\enm{ {\mathfrak{Quot}_{ {#1}/{#2}/{#3}}}}}
\nc{\Hilb}[2]{\enm{ {\mathfrak{Hilb}_{ {#1}/{#2}}}}}
\newcommand{\mfrak}[1]{\mathfrak{#1}}
\newcommand{\mf}[1]{\mathfrak{#1}}
\newcommand{\bb}[1]{\mathbb{#1}}

\nc {\Coh}[4] {\ensuremath{H^{#1}(\Pn{#2}{},{#3}({#4}))}}
\nc {\Ch}[3] {\enm{H^{#1}(X_t,{#2}_t({#3}))}}
\nc {\Qphi}[4]{\enm{ {\mathfrak{Quot}^{~#4}_{ {#1}/{#2}/{#3}}}}}
\nc {\Gra}[4]{\enm{ {\mathfrak{Grass}_{#2}({#3},{#4})}}}
\nc {\HomA}[2]{\enm{\mathrm{Hom}_A{#1}{#2}}}
\nc {\tr}{\mathrm{tr}}

\nc {\C}[2]{\enm{\left(\begin{array}{l} {#1} \\ {#2} \end{array} \right)}}
\nc {\mat}[4]{\enm{\left(\begin{array}{ll}{#1} & {#2} \\ {#3} & {#4}
\end{array}\right)}}

\def \mb{\mbox}

 \def \Z{{\mathbb Z}}

   \def \h{\hat{\ }}

  \def \bX{{\bf X}} \def \bH{{\bf H}}

\def \hG{\hat{\mathbb{G}}_{\mathrm{a}}} 

\def \hA{\hat{\mathbb{A}}}

\def \R1{R((q))[q']\h}

\usepackage{xcolor}

\DeclareMathOperator{\Spec}{\mathrm{Spec}}
\DeclareMathOperator{\Spf}{\mathrm{Spf}}

\newcommand{\Hom}{\mathrm{Hom}}

\newcommand{\oldmarginpar}[1]{}

\newcommand{\Em}{E_{[m]}}